\let\oldmarginpar\marginpar
\renewcommand\marginpar[1]{\-\oldmarginpar[\raggedleft\footnotesize #1]%
{\raggedright\footnotesize #1}}
\theoremstyle{plain}
\newtheorem{thm}[equation]{Theorem}
\newtheorem{lem}[equation]{Lemma}
\newtheorem{prop}[equation]{Proposition}
\theoremstyle{definition}
\newtheorem{defn}[equation]{Definition}
\newtheorem{assumptions}[equation]{Assumptions}
\theoremstyle{remark}
\newtheorem{rem}[equation]{Remark}
\numberwithin{equation}{section}
\newcommand{\R}{\mathbb{R}}
\newcommand{\Rn}{{\mathbb{R}^n}}
\renewcommand{\phi}{\varphi}
\def\le{\leqslant}
\def\leq{\leqslant}
\def\ge{\geqslant}
\def\geq{\geqslant}
\def\phi{\varphi}
\def\rho{\varrho}
\def\vartheta{\theta}
\newcommand{\Phiw}{\Phi_{\text{\rm w}}}
\newcommand{\Phic}{\Phi_{\text{\rm c}}}
\newcommand{\Phis}{\Phi_{\text{\rm s}}}
\def\loc{{\rm loc}}
\def\BMO{\operatorname{BMO}}
\newcommand{\inc}[1]{\hyperref[def:aInc]{{\normalfont(Inc){\ensuremath{_{#1}}}}}}
\newcommand{\dec}[1]{\hyperref[def:aDec]{{\normalfont(Dec){\ensuremath{_{#1}}}}}}
\newcommand{\ainc}[1]{\hyperref[def:aInc]{{\normalfont(aInc){\ensuremath{_{#1}}}}}}
\newcommand{\ainci}[1]{\hyperref[def:aInc]{{\normalfont(aInc){\ensuremath{_{#1}^\infty}}}}}
\newcommand{\adec}[1]{\hyperref[def:aDec]{{\normalfont(aDec){\ensuremath{_{#1}}}}}}
\newcommand{\adeci}[1]{\hyperref[def:aDec]{{\normalfont(aDec){\ensuremath{_{#1}^\infty}}}}}
\newcommand{\azero}{\hyperref[def:a0]{{\normalfont(A0)}}}
\newcommand{\aone}{\hyperref[def:a1]{{\normalfont(A1)}}}
\newcommand{\aones}[1]{\hyperref[def:a1s]{{\normalfont(A1-{\ensuremath{{#1}})}}}}
\newcommand{\atwo}{\hyperref[def:a2]{{\normalfont(A2)}}}
\date{\today}
\begin{document}

\title
[Fractional operators on generalized Orlicz spaces]
{Fractional operators and their commutators on generalized Orlicz spaces}
\author[A. Karppinen]{Arttu Karppinen}

\subjclass[2020]{46E30 (42B25, 42B35)}
\keywords{maximal operator, commutator, fractional operator, generalized Orlicz, Musielak--Orlicz}

\begin{abstract}
In this paper we examine boundedness of fractional maximal operator. The main focus is on commutators and maximal commutators on generalized Orlicz spaces for fractional maximal functions and Riesz potentials. We prove their boundedness between generalized Orlicz spaces and give a characterization for functions of bounded mean oscillation. To best of our knowledge, these results are also new in the special case of double phase spaces.
\end{abstract}

\maketitle


\section{Introduction}

The Hardy--Littlewood maximal operator is one of the most central operators in modern harmonic analysis and theory of partial differential equations. Its boundedness properties have been deeply connected to various Sobolev--Poincar\'e type inequalities and density of smooth functions in spaces related to non-standard growth conditions, for instance. For that reason, a systematic study of various variational integrals or partial differential equations under non-standard growth conditions has often started with proving boundedness of the maximal operator in a related function space, see for instance \cite{ColM15a, Has15}.

The maximal function has also a fractional variant introduced in \cite{fractional1} and its boundedness properties have also been intensively studied. This version is also related to fractional integrals such as Riesz potentials. Chianhi \cite{Cia99} first proved necessary and sufficient conditions for boundedness of fractional maximal operator between different Orlicz spaces. Here the condition between Orlicz functions was described by ''global domination''. In \cite{orlicz1} this condition was proven to be equivalent to presentation used also in this article. The boundedness results have also been investigated in the variable exponent case, see \cite{fractional3,CruDF08, fractional2}.

On the other hand, a boundedness of commutators $[T,b]f = T(bf) - bTf$ of various operators $T$ and functions $b$ in harmonic analysis has been a widely studied topic. One of the most influential paper regarding commutators in harmonic analysis is by Coifman, Rochberg and Weiss \cite{CoiRW76}. They gave a characterization of $\BMO(\Rn)$ in terms of boundedness of commutator $[T,b]$ in $L^{p}(\Rn)$ when $T$ is a Calder\'on--Zygmund singular integral operator. Similar characterization of Lipschitz spaces was given by Janson \cite{Jan78}: $[T,b]$ is bounded from $L^{p}(\Rn)$ to $L^{q}(\Rn)$, where $1<p<q<\infty$ if and only if $b \in \Lambda_\beta$ and $\beta = n\left (\frac{1}{p} - \frac{1}{q}\right )$. These type of results have successfully been used for research regarding products of functions from real Hardy space $H^1(\Rn)$ and $\BMO(\Rn)$ \cite{product} and giving new characterization of well known function spaces such as $\BMO(\Rn)$, Lipschitz spaces and even Campanato spaces \cite{campanato}.

When the operator is a maximal function instead of a Calder\'on--Zygmund operator, the boundedness results of commutators were studied in \cite{BasMR01} and for fractional maximal function in \cite{Lp-fractional}. 
These results have had their generalizations from standard $L^{p}$-spaces to their non-standard counterparts such as variable exponent Lebesgue and Orlicz spaces \cite{orlicz1, varexp1, varexp2} and more refined Morrey type spaces \cite{Alm20,morrey2,morrey1}.

In this paper we study boundedness of fractional operators and their commutator variants in generalized Orlicz spaces (also known as Musielak--Orlicz spaces). We prove that similar structural assumptions required for boundedness of Hardy--Littlewood maximal operator in generalized Orlicz spaces also imply boundedness of the fractional variant. This is closely related to boundedness of Riesz potentials, which have already been studied in \cite{HarH_Riesz}. Here we give a slightly different framework for defining the target space and record that these results do not require the generating $\phi$-functions to be $N$-functions. Assumptions for these boundedness results are in turn inherited by the commutators and we use the boundedness results to give a characterization for functions of bounded mean oscillation $\BMO(\Rn)$. Also, commutators related to Riesz potentials are investigated.

Generalized Orlicz spaces include many of the widely studied function spaces such as classical $L^{p}$ spaces, Orlicz spaces, variable exponent spaces and double phase spaces as special cases. Developing theory in this general setting allows us to capture many delicate phenomena not present in $L^{p}$-scale and describe them with improved accuracy using assumptions intrinsic to the problem at hand. Main results achieved here are, to best of our knowledge, new already in the double phase situation. The methodology in this paper is to give transparent proofs without resorting to extrapolation arguments or other abstract arguments.

However, the improved generality comes with some penalties. The connection between boundedness of the maximal operator in a generalized Orlicz space and its dual space is not yet fully understood and therefore not utilized here. For this reason, the assumptions remain technical instead of just the boundedness assumption. However, Diening has obtainted some partial results in this direction \cite{Die05}.  Additionally, the assumption linking the domain and target space is known to be sharp in Orlicz spaces, but the non-autonomous nature of generalized Orlicz spaces leaves this as an open question. To best of our knowledge, this is the case already in variable exponent spaces.

The rest of the article is organized as follows. In the second section we define all related function spaces together with operators we study. Third section is devoted to fractional maximal operator. Here we prove its boundedness and connect our assumptions to previous study related to Riesz potential. In the fourth section we prove boundedness of commutators related to Riesz potential. The fifth section briefly discusses commutators of Hardy--Littlewood maximal function and proofs owe to pointwise estimates proved in \cite{AgcGKM15}. In the final section give boundedness results of commutators related to fractional maximal function and give a new characterization of $\BMO(\Rn)$.

\section{Preliminaries}

By $L$-almost increasing we mean that a function satisfies the inequality $f(s) \le L f(t)$ for all $s<t$ and some constant $L\ge 1$ and  $L$-almost decreasing is defined analogously.
If there exists a constant $C$ such that $f(x) \leq C g(x)$ for almost every $x$, then we write $f \lesssim g$. Additionally, we write $f \approx g$ if $f\lesssim g\lesssim f$ holds. In this article $\Omega \subset \Rn$ is an open set. In the case of a measurable set$A$, we denote its characteristic function by $\chi_A$ and its Lebesgue measure by $|A|$. We reserve $Q$ to mean any cube in $\Rn$ with sides parallel to coordinate axis and specify it to have a center $x$ and side-length $2r$, denoted as $Q(x,r)$, when needed.
\subsection{Generalized $\Phi$-functions}

\begin{defn}
We say that $\phi: \Omega\times [0, \infty) \to [0, \infty]$ is a 
\textit{weak $\Phi$-function}, and write $\phi \in \Phiw(\Omega)$, if 
the following conditions hold:
\begin{itemize}
\item 
For every measurable function $f:\Omega\to \R$ the function $x \mapsto \phi(x, f(x))$ is measurable 
and for every $x \in \Omega$ the function $t \mapsto \phi(x, t)$ is non-decreasing.
\item 
$\displaystyle \phi(x, 0) = \lim_{t \to 0^+} \phi(x,t) =0$  and $\displaystyle \lim_{t \to \infty}\phi(x,t)=\infty$ for every $x\in \Omega$.
\item 
The function $t \mapsto \frac{\phi(x, t)}t$ is $L$-almost increasing on $(0,\infty)$ with $L$ independent of $x$.
\end{itemize}
If $\phi\in\Phiw(\Omega)$ and  additionally $t \mapsto \phi(x, t)$  is convex and left-continuous for almost every $x$, then $\phi$ is a 
\textit{convex $\Phi$-function}, and we write $\phi \in \Phic(\Omega)$. 
If $\phi\in\Phiw(\Omega)$ and  additionally $t \mapsto \phi(x, t)$  is  convex and  continuous in the topology of $[0, \infty]$ for almost every $x$, then $\phi$ is a 
\textit{strong $\Phi$-function}, and we write $\phi \in \Phis(\Omega)$. 

\end{defn}

A function $\phi \in \Phi_c(\Omega)$ is called $N$-function if for almost every $x$ we have $\phi(x,t) \in (0, \infty)$ for all $t>0$, $\lim_{t\to 0^+}\frac{\phi(x,t)}{t}=0$ and
$\lim_{t\to \infty}\frac{\phi(x,t)}{t}=\infty$. $N$-functions are always continuous, since they are finite and convex, and thus they are strong $\Phi$-functions.

Two functions $\phi$ and $\psi$ are \textit{equivalent}, 
$\phi\simeq\psi$, if there exists $L\ge 1$ such that 
$\psi(x,\frac tL)\le \phi(x, t)\le \psi(x, Lt)$ for every $x \in \Omega$ and every $t>0$.
Equivalent $\Phi$-functions give rise to the same space with 
comparable norms. 

Two functions $\phi$ and $\psi$ are \textit{weakly equivalent}, 
$\phi\sim\psi$, if there exist $L\ge 1$  and $h \in L^1(\Omega)$ such that 
$\psi(x,t)\le \phi(x, Lt) + h(x)$ and $\phi(x, t)\le \psi(x, Lt) + h(x)$ for all $t \ge 0$ and almost all $x \in \Omega$.
Weakly equivalent $\Phi$-functions give rise to the same space with 
comparable norms.

By $\phi^{-1}(x,t)$ we mean a generalized inverse defined by
\begin{align*}
\phi^{-1}(x,t) := \inf\{\tau \in \R : \phi(x,\tau) \geq t\}.
\end{align*}
Functions $\phi$ and $\psi$ are equivalent if and only if $\phi^{-1}(x,t) \approx \psi^{-1}(x,t)$.

For $\phi \in \Phi_w(\Omega)$ we define a conjugate $\phi$-function $\phi^{\ast} \in \Phi_w(\Omega)$ by
\begin{align*}
\phi^\ast(x,t) := \sup_{s>0} st - \phi(x,s).
\end{align*}

We collect two results how the generalized inverse behaves. For the proofs see
Lemma 2.3.3  and Theorem 2.4.8 in \cite{HarH_book}.

\begin{lem}\label{lem:composition}\label{lem:conjugate-inverse}
Let $\phi \in \Phiw(\Omega)$. Then
\begin{itemize}
\item[(a)]  $\phi(x, \phi^{-1}(x, t)) =t$ when $\phi \in \Phis(\Omega)$,
\item[(b)]  $\phi^{-1}(x,t) \left (\phi^{\ast}\right )^{-1}(x,t) \approx t$.
\end{itemize} 
\end{lem}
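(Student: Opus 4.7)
The plan is to handle the two parts separately. Part (a) is a straightforward consequence of the continuity of strong $\Phi$-functions, while part (b) splits naturally into an upper bound (via Young's inequality) and a lower bound (via a contradiction argument exploiting convexity and the definition of the inverse).

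For (a), fix $x \in \Omega$ and set $\tau_0 := \phi^{-1}(x,t)$. By the definition of the infimum there is a sequence $\tau_n \downarrow \tau_0$ with $\phi(x,\tau_n) \geq t$, so continuity of $\phi(x,\cdot)$ in the topology of $[0,\infty]$ (available since $\phi \in \Phis(\Omega)$) gives $\phi(x,\tau_0) \geq t$. Conversely, for every $\tau < \tau_0$ one has $\phi(x,\tau) < t$, and left-continuity yields $\phi(x,\tau_0) \leq t$. The edge cases $\tau_0 \in \{0,\infty\}$ are handled using $\phi(x,0)=0$ and $\lim_{t\to\infty}\phi(x,t)=\infty$.

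For (b) write $s := \phi^{-1}(x,t)$ and $r := (\phi^*)^{-1}(x,t)$. The upper bound $sr \lesssim t$ follows from Young's inequality $sr \leq \phi(x,s)+\phi^*(x,r)$ combined with the bounds $\phi(x,s) \leq t$ and $\phi^*(x,r) \leq t$, which come from left-continuity of convex $\Phi$-functions and the fact that $\phi^*$ is itself convex and lower semicontinuous. For the lower bound $sr \gtrsim t$ I argue by contradiction: pick any $r' > (\phi^*)^{-1}(x,t)$, so that $\phi^*(x,r') \geq t$, and assume $sr' < ct$ for a small constant $c$. Split the supremum defining $\phi^*(x,r')$ into $u \leq s$ and $u > s$. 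In the first range $ur' \leq sr' < ct$, whereas in the second range either $\phi(x,u) = \infty$ (the ``jump case'', so the term is $-\infty$) or almost-monotonicity of $u \mapsto \phi(x,u)/u$ together with the near-equality $\phi(x,s) \approx t$ gives $\phi(x,u) \gtrsim ut/s$, making $ur' - \phi(x,u)$ non-positive. Therefore $\phi^*(x,r') < t$, contradicting the choice of $r'$. Letting $r' \downarrow (\phi^*)^{-1}(x,t)$ produces $sr \gtrsim t$.

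The main technical difficulty is the lower bound in (b), specifically handling the point $s$ at which $\phi(x,\cdot)$ may be discontinuous. For a convex $\Phi$-function such a discontinuity can only occur at the boundary of the finiteness domain, where $\phi$ jumps to $\infty$, and the supremum defining $\phi^*$ is automatically restricted to $u \leq s$, which keeps the argument clean. In the general weak $\Phi$-function setting (where convexity is only replaced by almost-monotonicity of $\phi(x,t)/t$), the multiplicative constants inherited from the defining $L$-almost-increasing inequality are precisely what force the statement to use $\approx$ rather than equality.
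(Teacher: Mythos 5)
The paper does not prove this lemma at all --- it is imported verbatim from Lemma 2.3.3 and Theorem 2.4.8 of the cited monograph of Harjulehto and H\"ast\"o --- so your argument can only be measured against the standard proof, whose outline it follows: the two one-sided inequalities plus continuity for (a), and for (b) Young's inequality for the upper bound together with a direct estimate of the supremum defining $\phi^{\ast}$ for the lower bound. Part (a) is sound, and the strategy for the lower bound in (b) is the right one.

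The genuine gap is that twice in (b) you use properties of $\phi(x,\cdot)$ \emph{at} the point $s=\phi^{-1}(x,t)$ that a weak $\Phi$-function does not have. You assert $\phi(x,s)\le t$ ``from left-continuity of convex $\Phi$-functions'', but the hypothesis is only $\phi\in\Phiw(\Omega)$, which is neither convex nor left-continuous; for $\phi(x,u)=0$ on $[0,1)$ and $\phi(x,u)=\infty$ on $[1,\infty)$ one has $\phi^{-1}(x,t)=1$ and $\phi(x,1)=\infty$ for every $t>0$, so Young's inequality evaluated at $s$ gives nothing. Likewise the ``near-equality $\phi(x,s)\approx t$'' in the lower bound fails: with $\phi(x,u)=0$ on $[0,1)$ and $\phi(x,u)=Ku$ on $[1,\infty)$, $K$ large, one gets $\phi(x,\phi^{-1}(x,t))=K\gg t$. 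Both steps must instead rely only on what the generalized inverse actually provides, namely $\phi(x,\tau)<t$ for every $\tau<s$ and $\phi(x,\tau)\ge t$ for every $\tau>s$. For the upper bound, apply Young's inequality at $\tau<s$, where $\tau r\le\phi(x,\tau)+\phi^{\ast}(x,r)\le t+t$ (the bound $\phi^{\ast}(x,r)\le t$ \emph{is} legitimate, since $\phi^{\ast}$ is increasing and lower semicontinuous, hence left-continuous), and let $\tau\uparrow s$ --- the same limiting device you already use for $r'\downarrow r$. For the lower bound, apply the almost-monotonicity of $u\mapsto\phi(x,u)/u$ between $s'$ and $u$ for $s<s'<u$ and let $s'\downarrow s$ to get $\phi(x,u)\ge ut/(Ls)$ for $u>s$; the value $\phi(x,s)$ itself is irrelevant. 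Alternatively, note at the outset that both sides of (b) are stable under equivalence of $\Phi$-functions and pass to an equivalent convex, left-continuous representative, after which your argument is literally correct.
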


We define the following conditions. They guarantee boundedness of maximal operators and density of smooth functions. 
\begin{defn}
We say that $\phi:\Omega\times [0,\infty)\to [0,\infty)$ satisfies 
\begin{itemize}
\item[(aInc)$_p$] \label{def:aInc}
if
$t \mapsto \frac{\phi(x,t)}{t^{q}}$ is $L_p$-almost 
increasing in $(0,\infty)$ for some $L_p\ge 1$ and a.e.\ $x\in\Omega$.
\item[(aDec)$_q$] \label{def:aDec}
if
$t \mapsto \frac{\phi(x,t)}{t^{q}}$ is $L_q$-almost 
decreasing in $(0,\infty)$ for some $L_q\ge 1$ and a.e.\ $x\in\Omega$.
\item[(A0)]\label{def:a0}
if there exists $\beta \in(0, 1]$ such that $ \beta \le \phi^{-1}(x,
1) \le \frac1{\beta}$ for almost every $x \in \Omega$. 

\item[(A1)] \label{def:a1}
if there exists $\beta\in (0,1)$ such that
\[
\beta \phi^{-1}(x, t) \le \phi^{-1} (y, t)
\]
for every $t\in [1,\frac 1{|Q|}]$, almost every $x,y\in Q \cap \Omega$ and 
every cube $Q$ with $|Q|\le 1$.

\item[(A2)]\label{def:a2}
if there exists $\phi_\infty \in \Phiw$, $h \in L^1(\Omega)\cap L^\infty(\Omega)$, $\beta \in(0, 1]$ and $s>0$ such that
\[
\phi(x, \beta t) \le \phi_\infty(t) + h(x) \quad \text{ and } \quad \phi_\infty(\beta t) \le \phi(x, t) + h(x) 
\] 
for almost every $x \in \Omega$ when $\phi_\infty(t) \in [0, s]$ and $\phi(x, t) \in [0,s]$, respectively.
\end{itemize} 
\end{defn}

We say $\phi$ satisfies \ainc{} if it satisfies \ainc{p} for some $p>1$ and similarly \adec{} if it satisfies \adec{q} for some $q<\infty$. Conditions \ainc{} and \adec{} correspond to the $\nabla_2$ and $\Delta_2$ conditions respectively from the classical Orlicz space theory.

\begin{rem}\label{rem:assumptions}
We use the following properties without mentioning them explicitly.
\begin{itemize}
\item[(a)] The conditions \azero{} - \atwo{}, \ainc{p} and \adec{q} are invariant under equivalence ($\simeq$).
\item[(b)] If conditions \azero{} - \atwo{}, \ainc{p} and \adec{q} hold for $\phi$, then they hold also for $\phi^\ast$ (see \cite{HarH_book}).
\item[(c)] For any $p,q >0$ a generalized $\Phi$-functions satisfies \ainc{p} and \adec{q} if and only if $\phi^{-1}$ satisfies \ainc{1/q} and \adec{1/p} respectively.
\item[(d)] If $\phi$ satisfies \adec{q_1}, then it satisfies \adec{q_2} for all $q_2 >q_1$.
\end{itemize}

\end{rem}

The generalized Orlicz space $L^{\phi}(\Omega)$ comprises of measurable functions $f$ that satisfy
\begin{align*}
\int_{\Omega} \phi(x, \lambda f(x)) \, dx < \infty
\end{align*}
for some $\lambda >0$. $L^\phi(\Omega)$ is a (quasi) Banach space when equipped with a (quasi)norm
\begin{align*}
\|f\|_{L^{\phi}(\Omega)} := \inf \left \{ \lambda >0 : \int_{\Omega} \phi\left (x, \frac{f(x)}{\lambda}\right ) \, dx \leq 1 \right \}.
\end{align*}
If the set $\Omega$ is clear from the context, we abbreviate $\|f\|_{L^\phi(\Omega)}$ as $\|f\|_{\phi}$. A comprehensive presentation of generalized Orlicz spaces can be found in \cite{HarH_book}.

The conjugate $\Phi$-function generates the associate space of $L^{\phi}(\Omega)$ as the following Lemma shows. 

\begin{lem}[Norm conjugate formula, Theorem 3.4.6 in \cite{HarH_book}]\label{lem:norm-conjugate} If $\phi \in \Phi(\Omega)$, then for all measurable $f$
\begin{align*}
\|f\|_{L^{\phi}(\Omega)} \approx \sup_{\|g\|_{L^{\phi^\ast}(\Omega)\leq 1}} \int_{\Omega} |f(x)g(x)| \ dx. 
\end{align*}
\end{lem}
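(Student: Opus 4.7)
The argument splits into an upper and a lower bound, both resting on Young's inequality $st \le \phi(x,s) + \phi^\ast(x,t)$, which follows at once from the definition of $\phi^\ast$ as the Legendre transform. The upper bound is essentially mechanical, whereas the lower bound requires constructing an explicit near-extremal $g$ and is the main technical step.

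For the direction $\sup_{\|g\|_{\phi^\ast}\le 1}\int|fg| \lesssim \|f\|_\phi$, I would apply Young's inequality pointwise with $s = |f(x)|/\|f\|_\phi$ and $t = |g(x)|$, integrate, and use that both modulars are bounded by one at the respective unit norm (by the definition of the Luxemburg norm). This yields $\int_\Omega |fg|\,dx \le 2\|f\|_\phi\,\|g\|_{\phi^\ast}$, and taking the supremum finishes this half.

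For the reverse inequality, after a standard truncation reducing to bounded, compactly supported $f$ and the normalization $\|f\|_\phi = 1$, the candidate competitor is
\[ g(x) := \operatorname{sign} f(x)\cdot (\phi^\ast)^{-1}\bigl(x,\phi(x,|f(x)|/c)\bigr) \]
for a parameter $c$ slightly smaller than one. Lemma \ref{lem:conjugate-inverse}(b) gives the pointwise identity (up to constants) $|f|\,(\phi^\ast)^{-1}(x,\phi(x,|f|/c)) \approx c\,\phi(x,|f|/c)$, so that $\int|fg|\,dx \gtrsim c\int\phi(x,|f|/c)\,dx$; on the other hand $\phi^\ast(x,(\phi^\ast)^{-1}(x,t))\le t$ (a general property of the generalized inverse) yields $\int\phi^\ast(x,g)\,dx \lesssim \int\phi(x,|f|/c)\,dx$ and thus $\|g\|_{\phi^\ast}\lesssim 1$ once the right-hand side is controlled.

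The main obstacle is the calibration of the parameter $c$. In the weak $\Phi$-setting one only has $\int\phi(x,|f|/\|f\|_\phi)\,dx \le 1$ without equality, and $\phi$ may take the value $\infty$ or be only left-continuous; one must let $c \nearrow 1$ along with a monotone/dominated convergence argument to keep $\int\phi(x,|f|/c)\,dx$ bounded below by a universal constant, while retaining the modular control on $g$ after rescaling by a constant factor. This is precisely the subtle part of the argument carried out in \cite[Thm.~3.4.6]{HarH_book}, which the present lemma invokes.
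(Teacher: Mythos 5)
The paper gives no proof of this lemma at all --- it is imported verbatim as Theorem 3.4.6 of \cite{HarH_book} --- so there is no in-paper argument to match yours against; the relevant question is only whether your sketch is a viable proof. It is, in outline, the standard one. The easy direction is exactly H\"older's inequality (the paper's Lemma \ref{lem:holder}), and your Young's-inequality derivation of it is fine, modulo the usual caveat that for weak $\Phi$-functions the unit-ball property $\int\phi(x,|f|/\|f\|_\phi)\,dx\le 1$ and Young's inequality itself hold only after passing to an equivalent convex $\phi$, costing a constant. For the reverse inequality your competitor $g=\operatorname{sign}(f)\,(\phi^\ast)^{-1}(x,\phi(x,|f|/c))$ is the right object, and the two pointwise facts you invoke ($(\phi^\ast)^{-1}(x,\phi(x,s))\gtrsim\phi(x,s)/s$ from Lemma \ref{lem:conjugate-inverse}(b) together with $\phi^{-1}(x,\phi(x,s))\le s$, and $\phi^\ast(x,(\phi^\ast)^{-1}(x,t))\lesssim t$) are correct; note only that the first is a one-sided $\gtrsim$, not the $\approx$ you wrote, though $\gtrsim$ is all you use. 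The genuine difficulty is the one you name: with $c<\|f\|_\phi$ the modular $\int\phi(x,|f|/c)\,dx$ exceeds $1$ and may be infinite (weak $\Phi$-functions may take the value $\infty$), so $\varrho_{\phi^\ast}(g)$ is not yet controlled, and one must truncate $f$ on sets where $\phi(x,|f(x)|/c)$ is bounded and pass to the limit, or renormalize $g$ by its modular using convexity. You defer exactly this step to the cited theorem, so strictly speaking your argument is a reduction to \cite[Thm.~3.4.6]{HarH_book} rather than a self-contained proof --- but that is precisely the paper's own treatment, so nothing is lost. For comparison, the proof in \cite{HarH_book} reaches the lower bound by a slightly different route: biconjugation $\phi^{\ast\ast}\simeq\phi$ plus a measurable near-optimal selection $t=g(x)$ in $\phi^{\ast\ast}(x,s)=\sup_t(st-\phi^\ast(x,t))$, which avoids computing with generalized inverses at the price of invoking the Fenchel--Moreau identity; your explicit-competitor version buys a more concrete extremizer but pushes all the delicacy into the calibration of $c$.
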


We extend the classical H\"older's inequality to generalized Orlicz spaces as follows.
\begin{lem}[Lemma 3.2.11 in \cite{HarH_book}]\label{lem:holder}
Let $\phi \in \Phi_w(\Omega)$, $f \in L^{\phi}(\Omega)$ and $g \in L^{\phi^{\ast}}(\Omega)$. Then
\begin{align*}
\int_\Omega f(x)g(x) \, dx \leq 2\|f\|_{L^{\phi}(\Omega)} \|g\|_{L^{\phi^{\ast}}(\Omega)}.
\end{align*}
Here the constant cannot be lower than 2 in general.
\end{lem}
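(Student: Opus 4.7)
The plan is to reduce the inequality to Young's inequality for the Legendre transform, which gives the factor $2$ naturally. First I would observe that by the very definition of the conjugate
\[
\phi^{\ast}(x,t) = \sup_{s>0} \bigl( st - \phi(x,s) \bigr),
\]
the pointwise Young-type bound
\[
s\,t \le \phi(x,s) + \phi^{\ast}(x,t)
\]
holds for every $x \in \Omega$ and every $s, t \ge 0$. This requires nothing beyond the definition, so no convexity or continuity hypotheses on $\phi$ are needed here.

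The next step is the standard normalization. Pick any $\lambda > \|f\|_{L^{\phi}(\Omega)}$ and $\mu > \|g\|_{L^{\phi^{\ast}}(\Omega)}$; by the definition of the Luxemburg (quasi)norm we have
\[
\int_{\Omega} \phi\!\left(x, \tfrac{|f(x)|}{\lambda}\right) dx \le 1 \qquad \text{and} \qquad \int_{\Omega} \phi^{\ast}\!\left(x, \tfrac{|g(x)|}{\mu}\right) dx \le 1.
\]
Applying Young pointwise with $s = |f(x)|/\lambda$ and $t = |g(x)|/\mu$ and integrating gives
\[
\frac{1}{\lambda\mu}\int_{\Omega} |f(x)\,g(x)|\, dx \le \int_{\Omega} \phi\!\left(x, \tfrac{|f|}{\lambda}\right) + \phi^{\ast}\!\left(x, \tfrac{|g|}{\mu}\right) dx \le 2,
\]
hence $\int_{\Omega}|fg|\,dx \le 2\lambda\mu$. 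Taking the infimum over admissible $\lambda,\mu$ yields the claimed bound.

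The only delicate point is the infimum step: at the value $\lambda = \|f\|_{L^{\phi}(\Omega)}$ itself, the modular estimate $\int \phi(x, f/\lambda)\, dx \le 1$ may fail in the weak setting because we do not assume left-continuity of $t \mapsto \phi(x,t)$. This is precisely why the argument is performed for $\lambda > \|f\|_{L^{\phi}(\Omega)}$ and then the limit $\lambda \searrow \|f\|_{L^{\phi}(\Omega)}$ is taken, which is harmless since both sides are continuous in $\lambda\mu$. The sharpness remark (constant cannot be lowered below $2$) is a separate issue: one produces an example where both modulars saturate simultaneously, so splitting $1+1$ cannot be improved. I would not attempt to reprove sharpness here and would simply refer to the example in \cite{HarH_book}.
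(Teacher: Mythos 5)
Your proof is correct and is essentially the standard argument (Young's inequality directly from the definition of $\phi^{\ast}$, normalization by $\lambda>\|f\|_{\phi}$ and $\mu>\|g\|_{\phi^{\ast}}$, then passing to the infimum), which is exactly the proof of Lemma 3.2.11 in \cite{HarH_book} that the paper cites without reproducing. Your handling of the weak-$\Phi$-function subtlety (working strictly above the norm values because left-continuity is not assumed) and your deferral of the sharpness example to the reference are both appropriate.
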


We can also have a $\phi$-norm on the left-handside.
\begin{lem}[Generalized H\"older's inequality]\label{lem:gen-holder} 
Let $\phi_i \in \Phi_w(\Omega)$ for $i=1,2,3$. If for all $t\geq 0$ and almost every $x \in \Omega$ we have
\begin{align*}
\phi_1^{-1}(x,t) \phi_{2}^{-1}(x,t) \leq \phi_3^{-1}(x,t),
\end{align*} 
then 
\begin{align*}
\|fg\|_{L^{\phi_3}(\Omega)} \lesssim 2 \|f\|_{L^{\phi_1}(\Omega)} \|g\|_{L^{\phi_2}(\Omega)}
\end{align*}
for every $f \in L^{\phi_1}(\Omega)$ and $g \in L^{\phi_2}(\Omega)$. If the $\Phi$-functions are convex, then ''$\lesssim$'' can be replaced with ''$\leq$''.
\end{lem}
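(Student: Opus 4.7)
The plan is to prove the pointwise Young-type inequality
$$\phi_3(x,uv) \le \phi_1(x,u) + \phi_2(x,v), \qquad u,v\ge 0,$$
and then integrate to obtain a modular estimate, from which the norm bound follows.

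For the pointwise step, I would fix $x\in\Omega$ and set $s:=\phi_1(x,u)$, $\tau:=\phi_2(x,v)$. Because $\phi_1$ is non-decreasing, the definition of the generalized inverse gives $u\le\phi_1^{-1}(x,s+\varepsilon)$ for every $\varepsilon>0$, and analogously $v\le\phi_2^{-1}(x,\tau+\varepsilon)$. Combining these with the monotonicity of each $\phi_i^{-1}$ and the hypothesis,
\begin{align*}
uv \le \phi_1^{-1}(x,s+\tau+\varepsilon)\,\phi_2^{-1}(x,s+\tau+\varepsilon) \le \phi_3^{-1}(x,s+\tau+\varepsilon).
\end{align*}
Applying $\phi_3(x,\cdot)$ together with the standard relation $\phi_3(x,\phi_3^{-1}(x,r))\le r$ and then letting $\varepsilon\to 0^+$ yields the pointwise inequality.

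With this in hand, I would substitute $u=f(x)/\|f\|_{\phi_1}$ and $v=g(x)/\|g\|_{\phi_2}$ and integrate, using $\int_\Omega\phi_i(x,\cdot)\,dx\le 1$ for functions of $\phi_i$-norm at most one, to obtain
\begin{align*}
\int_\Omega \phi_3\!\left(x,\frac{f(x)g(x)}{\|f\|_{\phi_1}\|g\|_{\phi_2}}\right)dx \le 2.
\end{align*}
If $\phi_3$ is convex, convexity allows the factor $2$ to be absorbed into the argument, yielding the sharp bound $\|fg\|_{\phi_3}\le 2\|f\|_{\phi_1}\|g\|_{\phi_2}$. For a general weak $\Phi$-function, the same absorption works at the cost of an extra multiplicative constant coming from the $L$-almost-increasing property of $t\mapsto\phi_3(x,t)/t$, producing the stated $\lesssim$-bound.

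The main delicacy is the relation $\phi_3(x,\phi_3^{-1}(x,r))\le r$ used in the pointwise step, which needs some care at possible jumps of $\phi_3$; the $\varepsilon$-slack introduced in the argument is precisely what accommodates this, after which the passage $\varepsilon\to 0^+$ is immediate by monotonicity. A secondary point is that for a non-convex weak $\Phi$-function the modular at $f/\|f\|_{\phi_i}$ need not exactly equal $1$, but it is at most $1$ (or may be taken arbitrarily close to $1$ by inflating the denominator slightly), which is all that the argument requires.
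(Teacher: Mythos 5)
Your overall strategy coincides with the paper's: establish a pointwise Young-type inequality $\phi_3(x,uv)\le\phi_1(x,u)+\phi_2(x,v)$, integrate to get a modular bound, and absorb the factor $2$ by convexity (resp.\ by the almost-increasing property in the weak case). The only real deviation is in how the Young inequality is derived — the paper evaluates both inverses at $\max\{\phi_1(x,u),\phi_2(x,v)\}$ and argues by cases, while you evaluate them at the sum $s+\tau+\varepsilon$; in the convex (hence left-continuous) setting both derivations are valid.

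The gap is in the non-convex case. The relation $\phi_3(x,\phi_3^{-1}(x,r))\le r$ genuinely requires left-continuity of $t\mapsto\phi_3(x,t)$, which weak $\Phi$-functions in this paper are not assumed to have, and the $\varepsilon$-slack does \emph{not} repair this: the slack only guarantees $uv\le\phi_3^{-1}(x,s+\tau+\varepsilon)$, and when equality holds there the value $\phi_3(x,uv)$ can jump arbitrarily far above $s+\tau+\varepsilon$ for every $\varepsilon>0$. Concretely, take $\phi_1(t)=0$ for $t\le 2$ and $\infty$ otherwise, $\phi_2(t)=0$ for $t\le \tfrac12$ and $\infty$ otherwise, and $\phi_3(t)=0$ for $t<1$ and $\infty$ for $t\ge 1$; these are weak $\Phi$-functions, the hypothesis holds since $\phi_1^{-1}\phi_2^{-1}\equiv 1\equiv\phi_3^{-1}$ on $(0,\infty)$, yet with $u=2$, $v=\tfrac12$ one has $s=\tau=0$, $uv=1=\phi_3^{-1}(\varepsilon)$ and $\phi_3(x,uv)=\infty$, so your pointwise inequality fails. (The norm inequality itself survives, because the norm is an infimum.) Two standard repairs are available: either replace each $\phi_i$ by an equivalent convex — hence left-continuous — $\Phi$-function as the paper does via \cite[Lemma 2.2.1]{HarH_book}, at the cost of the equivalence constants in the final bound; or prove instead the deflated inequality $\phi_3\big(x,\tfrac{uv}{1+\delta}\big)\le\phi_1(x,u)+\phi_2(x,v)$ for every $\delta>0$, which your argument does give (since $uv/(1+\delta)$ lies strictly below $\phi_3^{-1}(x,s+\tau+\varepsilon)$) and which suffices for the norm estimate. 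With either repair your proof is complete.
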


\begin{proof}
Let us first suppose that $\phi_i \in \Phi_c(\Omega)$. We start the proof with an associated Young's inequality. Directly from the definitions we deduce $\phi_{1}(x, \phi_{i}^{-1}(x,t)) \leq t \leq \phi_{i}^{-1}(x,\phi_i(x,t))$. We have two cases: either $\phi_1(x,s) \leq \phi_2(x,t)$ or $\phi_1(x,s) > \phi_2(x,t)$. In the first situation
\begin{align*}
st &\leq \phi_1^{-1}(x,\phi_1(x,s)) \phi_2^{-1}(x, \phi_2(x,t)) \leq \phi_1^{-1}(x, \phi_2(x,t))\phi_2^{-1}(x,\phi_2(x,t))\\
&\leq \phi_3^{-1}(x,\phi_2(x,t)).
\end{align*} 
Applying $\phi_3(x,\cdot)$ to both sides, we have
\begin{align*}
\phi_3(x,st) \leq \phi_2(x,t).
\end{align*}
If on the other hand we are in situation 2, similar estimates yield $\phi_3(x,st) \leq \phi_1(x,s)$, so we have the Young's inequality
\begin{align*}
\phi_3(x,st) \leq \max\{\phi_1(x,s), \phi_2(x,t)\} \leq \phi_1(x,s) + \phi_2(x,t).
\end{align*}

Now we can prove the generalized H\"older's inequality. Let $\varepsilon >0$ and without loss of generality assume that $\|f\|_{\phi_1}=\|g\|_{\phi_2}=1$. Now the convexity of $\phi_3$ and the previous Young's inequality yield
\begin{align*}
\int_{\Omega}\phi_3\left (x, \dfrac{f(x)g(x)}{2(1+\varepsilon)^2}\right ) \, dx \leq  \dfrac{1}{2} \int_{\Omega} \phi_1 \left (x, \dfrac{f(x)}{1+\varepsilon}\right ) \, dx + \dfrac{1}{2}\int_{\Omega} \phi_2 \left (x, \dfrac{g(x)}{1+\varepsilon}\right ) \, dx \leq 1.
\end{align*}
In other words, $\|fg\|_{\phi_3} \leq 2(1+\varepsilon)^2$ and thus the claim follows by letting $\varepsilon \to 0$.

Suppose now we are not in the convex regime, that is $\phi_i \in \Phi_w(\Omega)$. There exist $\psi_i \in \Phi_c(\Omega)$ such that $\phi_i \simeq \psi_i$ \cite[Lemma 2.2.1]{HarH_book}. Now using the Young's inequality for convex functions $\psi_i$ and  equivalence, we end up with 
\begin{align*}
\phi_3\left (x, \frac{st}{L^3}\right ) \leq \psi_3\left (x, \frac{st}{L^2} \right ) \leq \psi_1\left (x,\frac{s}{L}\right ) + \psi_2\left (x,\frac{t}{L}\right ) \leq \phi_1(x,s) + \phi_2(x,t)
\end{align*}
where $L$ is the largest equivalence constant of the three $\Phi$-functions. Denoting the constant of \ainc{1} by $a$, we have
\begin{align*}
\int_{\Omega}\phi_3\left (x, \dfrac{f(x)g(x)}{2aL^3(1+\varepsilon)^2}\right ) \, dx \leq  \dfrac{1}{2} \int_{\Omega} \phi_1 \left (x, \dfrac{f(x)}{1+\varepsilon}\right ) \, dx + \dfrac{1}{2}\int_{\Omega} \phi_2 \left (x, \dfrac{g(x)}{1+\varepsilon}\right ) \, dx \leq 1.
\end{align*}
Thus $\|fg\|_{\phi_3} \leq 2 aL^3(1+\varepsilon)^2 \to 2aL^3$ as $\varepsilon \to 0$ and inequality is proven.
\end{proof}

We have the following rather crude but sometimes suitable estimate between norms and and integrals.
\begin{lem}\label{lem:crude}
Assume that $\phi \in \Phi_w(\Omega)$ satisfies \ainc{p} and \adec{q} for any $1 \leq p \leq q < \infty$. Then
\begin{align*}
\|f\|_{L^{\phi}(\Omega)} \lesssim \max \left \{ \left (\int_{\Omega} \phi(x,f(x)) \, dx \right )^{\frac{1}{p}}, \left (\int_{\Omega} \phi(x,f(x)) \, dx \right )^{\frac{1}{q}} \right \}.
\end{align*}
\end{lem}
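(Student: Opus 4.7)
The plan is to use the unit ball property of the Luxemburg norm: it suffices to exhibit, for each $f$, some $\lambda>0$ with $\int_\Omega \phi(x,f(x)/\lambda)\,dx \le 1$, because then $\|f\|_\phi \le \lambda$. Write $I := \int_\Omega \phi(x,f(x))\,dx$ and assume $I>0$ (otherwise $f=0$ a.e.\ and there is nothing to prove). The whole argument is a scaling computation driven by \ainc{p} and \adec{q}.

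First I would record the two pointwise scaling estimates that come directly from the definitions. From \ainc{p}, since $t\mapsto \phi(x,t)/t^p$ is $L_p$-almost increasing, taking $s=f(x)/\lambda$ and $t=f(x)$ with $\lambda\ge 1$ gives
\[
\phi\!\left(x,\tfrac{f(x)}{\lambda}\right) \le L_p\,\lambda^{-p}\,\phi(x,f(x)) \qquad (\lambda\ge 1).
\]
From \adec{q}, since $t\mapsto \phi(x,t)/t^q$ is $L_q$-almost decreasing, taking $s=f(x)$ and $t=f(x)/\lambda$ with $0<\lambda\le 1$ gives
\[
\phi\!\left(x,\tfrac{f(x)}{\lambda}\right) \le L_q\,\lambda^{-q}\,\phi(x,f(x)) \qquad (0<\lambda\le 1).
\]
Integrating, I get $\int_\Omega \phi(x,f(x)/\lambda)\,dx \le L_p\lambda^{-p}I$ in the first range and $\le L_q\lambda^{-q} I$ in the second.

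Now I split on the size of $I$. If $I\ge 1$, then $\lambda := (L_p I)^{1/p}\ge 1$, so the \ainc{p}-bound applies and forces the modular to be $\le 1$; this yields $\|f\|_\phi \le L_p^{1/p}\,I^{1/p}$. If $I \le 1/L_q$, then $\lambda := (L_q I)^{1/q}\le 1$, so the \adec{q}-bound applies and yields $\|f\|_\phi \le L_q^{1/q}\,I^{1/q}$. The remaining case $1/L_q < I < 1$ is handled directly: taking $\lambda=1$ gives modular $I<1$, hence $\|f\|_\phi\le 1 \le L_q^{1/q}\,I^{1/q}$, where the last inequality uses $I>1/L_q$. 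Since $p\le q$ and hence $I^{1/p}\le I^{1/q}$ when $I\le 1$ (and the reverse when $I\ge 1$), in every case
\[
\|f\|_{L^\phi(\Omega)} \le \max\{L_p^{1/p},L_q^{1/q}\}\,\max\{I^{1/p},I^{1/q}\},
\]
which is exactly the claim with implicit constant absorbing $L_p,L_q$.

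The only subtle point, and the one thing I would be careful about, is the transition zone $1/L_q < I < 1$ where the natural choice $\lambda=(L_qI)^{1/q}$ would violate $\lambda\le 1$ and thus fall outside the range of validity of the \adec{q}-scaling; that is why I would fall back to $\lambda=1$ there. Everything else is routine bookkeeping.
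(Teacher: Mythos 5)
Your proof is correct; the paper states this lemma without proof, and your scaling argument via the unit-ball property of the Luxemburg norm is exactly the standard one for this estimate (cf.\ \cite{HarH_book}). The only point worth a word is that the reduction to $I>0$ (``otherwise $f=0$ a.e.'') silently uses \adec{q} to rule out $\phi(x,\cdot)$ vanishing on an interval $(0,t_0]$, but that is immediate from the hypotheses.
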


The explicit norms of functions are often difficult to calculate, but under suitable assumptions this can be done for characteristic functions of simple sets such as cubes.
\begin{lem}[Proposition 4.4.11 in \cite{HarH_book}]\label{lem:norm-of-ball}
Let $\phi \in \Phi_w(\Omega)$ satisfy \azero{}, \aone{} and \atwo{}. Then for every cube $Q \subset \Omega$ we have
\begin{align*}
\|\chi_Q\|_{L^{\phi}(\Omega)} \|\chi_Q\|_{L^{\phi^\ast}(\Omega)} \approx |Q|.
\end{align*}
Here the implicit constant is independent of the cube $Q$.
\end{lem}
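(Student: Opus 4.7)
The plan is to reduce the desired product estimate to the single-sided formula
\[
\|\chi_Q\|_{L^\phi(\Omega)} \approx \frac{1}{\phi^{-1}(x_Q, 1/|Q|)}
\]
for any fixed $x_Q \in Q$, with implicit constants independent of $Q$ and of $x_Q$. Once this is known, the claim follows at once: by Remark~\ref{rem:assumptions}(b) the conjugate $\phi^\ast$ inherits \azero{}--\atwo{}, so the same formula applies to $\phi^\ast$, and multiplying the two norm estimates and invoking Lemma~\ref{lem:conjugate-inverse}(b) at $t=1/|Q|$ gives $\|\chi_Q\|_{L^\phi}\|\chi_Q\|_{L^{\phi^\ast}}\approx |Q|$.

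To establish the single-sided formula I split into cases by cube size. For $|Q|\le 1$ the value $1/|Q|$ lies in the range $[1,1/|Q|]$, so \aone{} directly yields $\phi^{-1}(x,1/|Q|)\approx \phi^{-1}(x_Q,1/|Q|)$ uniformly for $x\in Q$. Setting $\lambda=c\,\phi^{-1}(x_Q,1/|Q|)^{-1}$ for a suitable constant $c$ and using (after replacing $\phi$ by an equivalent $\Phi_c$-function via \cite[Lemma~2.2.1]{HarH_book} so that Lemma~\ref{lem:composition}(a) applies) the identity $\phi(x,\phi^{-1}(x,1/|Q|))\le 1/|Q|$, we get
\[
\int_Q \phi(x,1/\lambda)\,dx \,\lesssim\, |Q|\cdot (1/|Q|)=1,
\]
so $\|\chi_Q\|_{L^\phi}\lesssim \lambda$. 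The reverse inequality is symmetric: shrinking $\lambda$ by a multiplicative constant pushes the modular above $1$ by the same pointwise comparability.

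For $|Q|>1$, \aone{} is vacuous and one must use \atwo{}. The sandwich inequalities $\phi(x,\beta t)\le \phi_\infty(t)+h(x)$ and $\phi_\infty(\beta t)\le \phi(x,t)+h(x)$ imply $\phi^{-1}(x,1/|Q|)\approx \phi_\infty^{-1}(1/|Q|)$ on the admissible range, and \azero{} is what keeps this range nontrivial by bounding $\phi^{-1}(\cdot,1/|Q|)\le \phi^{-1}(\cdot,1)\le 1/\beta$. Since $\phi_\infty$ is $x$-independent, the norm of $\chi_Q$ in the autonomous Orlicz space equals exactly $1/\phi_\infty^{-1}(1/|Q|)$, and the $h$-perturbation contributes only a multiplicative constant to the modular because $\int_Q h\le \|h\|_{L^1(\Omega)}$.

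I expect the main obstacle to be the bookkeeping in the large-cube case: the \atwo{}-inequalities are only valid when $\phi(x,t)$ or $\phi_\infty(t)$ lie in the admissible interval $[0,s]$, so one must verify via \azero{} that the relevant $t = \phi^{-1}(x_Q, 1/|Q|)$ actually falls into this range, and handle the complementary part by monotonicity together with the almost-increase of $\phi(x,t)/t$. Care is also needed to perform the passage to an equivalent convex $\Phi$-function once globally (with one universal equivalence constant), so that constants do not accumulate cube-by-cube and the final implicit constant in the product estimate is truly independent of $Q$.
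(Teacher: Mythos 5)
The paper itself offers no proof of this lemma --- it is quoted verbatim from \cite[Proposition 4.4.11]{HarH_book}, where the upper bound is obtained from the uniform boundedness of the averaging operators $T_Q f=\chi_Q\fint_Q|f|\,dy$ combined with the norm conjugate formula (Lemma \ref{lem:norm-conjugate}); the lower bound $|Q|\le 2\|\chi_Q\|_{\phi}\|\chi_Q\|_{\phi^{\ast}}$ is immediate from H\"older's inequality (Lemma \ref{lem:holder}) with no hypotheses at all. Your route is genuinely different, and it contains a real gap.

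The gap is the announced single-sided formula $\|\chi_Q\|_{L^{\phi}}\approx\phi^{-1}(x_Q,1/|Q|)^{-1}$ for a fixed $x_Q\in Q$. This pointwise formula is false in general for generalized Orlicz spaces, and the paper says so explicitly in the remark following Theorem \ref{thm:M_alpha-bounded}: the norm of $\chi_Q$ can only be computed in the integrated form of Lemma \ref{lem:measure-scaling}, with $\fint_Q\phi^{-1}(x,1/|Q|)\,dx$ in place of a point value. For $|Q|\le 1$ you are safe, since \aone{} makes $\phi^{-1}(\cdot,1/|Q|)$ comparable at a.e.\ pair of points of $Q$. For $|Q|>1$, however, \atwo{} compares $\phi(x,\cdot)$ with $\phi_\infty$ only up to the additive error $h(x)\in L^1$, which cannot be absorbed pointwise; your own large-cube computation in fact establishes $\|\chi_Q\|_{\phi}\approx\phi_\infty^{-1}(1/|Q|)^{-1}$, which is not the formula you announced. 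This matters at the final multiplication step: for large cubes you must apply Lemma \ref{lem:conjugate-inverse}(b) to the autonomous function $\phi_\infty$, and to use the same $\phi_\infty$ for both factors you need the nontrivial fact that $(\phi^{\ast})_\infty\simeq(\phi_\infty)^{\ast}$, which hides behind Remark \ref{rem:assumptions}(b) but is never invoked. Two smaller repairs: the \atwo{} inequalities are available only when the modular values lie in $[0,s]$, so cubes with $1<|Q|<1/s$ need a separate (easy) treatment via \azero{} and monotonicity; and the ``symmetric'' reverse modular estimate uses $\phi(x,\phi^{-1}(x,t))\ge t$, which holds for left-continuous convex $\Phi$-functions but not for merely weak ones, so the passage to an equivalent $\Phi_c(\Omega)$-function must be made once, globally, before that step.
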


The previous lemma combined with boundedness of averaging operator yields the following result (see \cite[Proposition 4.4.11]{HarH_book}).
\begin{lem}\label{lem:measure-scaling}
Let $\phi \in \Phi_w(\Omega)$ satisfy \azero{}, \aone{} and \atwo{}. Then for every cube $Q \subset \Omega$ we have
\begin{align*}
\|\chi_{Q}\|_{L^{\phi}(\Omega)} \approx \dfrac{1}{\fint_{Q}\phi^{-1}\left (x, \frac{1}{|Q|}\right )\,dx}.
\end{align*}
\end{lem}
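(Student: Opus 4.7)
The plan is to reduce the statement to the duality already provided by Lemma \ref{lem:norm-of-ball} together with the boundedness of the averaging operator $T_Q f := \chi_Q \fint_Q f\,dy$ on $L^{\phi}(\Omega)$. The averaging estimate is the standard first step in proving boundedness of the Hardy--Littlewood maximal operator under \azero{}, \aone{}, \atwo{} (see, e.g., \cite[Chapter 4]{HarH_book}), and here it lets one collapse a carefully chosen test function to a constant multiple of $\chi_Q$. Lemma \ref{lem:conjugate-inverse}(b) together with a harmonic--arithmetic mean step will then convert a bound phrased via $(\phi^\ast)^{-1}$ into the claimed bound phrased via $\phi^{-1}$.

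For the upper bound I would test $T_Q$ against $f(y) := \phi^{-1}(y, 1/|Q|)\chi_Q(y)$. The modular computation
$$\int_\Omega \phi(y, f(y)) \, dy = \int_Q \phi\bigl(y, \phi^{-1}(y, 1/|Q|)\bigr) \, dy \;\leq\; 1,$$
which uses the defining property of the generalized inverse (after passing to a left-continuous equivalent if $\phi$ is not already so), yields $\|f\|_{L^\phi}\lesssim 1$. Since $T_Q f = \chi_Q \fint_Q \phi^{-1}(y, 1/|Q|)\,dy$, boundedness of $T_Q$ immediately gives
$$\|\chi_Q\|_{L^\phi(\Omega)} \;\lesssim\; \frac{1}{\fint_Q \phi^{-1}(y, 1/|Q|)\,dy}.$$

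For the reverse inequality I would run the very same argument with $\phi$ replaced by $\phi^\ast$, which satisfies \azero{}--\atwo{} by Remark \ref{rem:assumptions}(b). Inserting the resulting estimate on $\|\chi_Q\|_{L^{\phi^\ast}}$ into Lemma \ref{lem:norm-of-ball} and rewriting $(\phi^\ast)^{-1}(y, 1/|Q|) \approx (|Q|\,\phi^{-1}(y, 1/|Q|))^{-1}$ via Lemma \ref{lem:conjugate-inverse}(b) gives
$$\|\chi_Q\|_{L^\phi(\Omega)} \;\approx\; \frac{|Q|}{\|\chi_Q\|_{L^{\phi^\ast}(\Omega)}} \;\gtrsim\; \fint_Q \frac{dy}{\phi^{-1}(y, 1/|Q|)} \;\geq\; \frac{1}{\fint_Q \phi^{-1}(y, 1/|Q|)\,dy},$$
the final step being the harmonic--arithmetic mean inequality (Jensen).

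The main technical point I foresee is justifying the modular bound $\int_Q \phi(y, \phi^{-1}(y, 1/|Q|))\,dy \leq 1$ for a merely weak $\Phi$-function: equality holds for strong $\Phi$-functions by Lemma \ref{lem:composition}(a), but in general only an estimate up to a multiplicative constant is directly available, obtained by passing to a convex (hence left-continuous) equivalent. This costs only an absolute constant, which is absorbed into the $\approx$ in the statement.
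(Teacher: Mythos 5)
Your argument is correct and follows exactly the route the paper indicates for this lemma: one direction from testing the averaging operator $T_Q$ (bounded under \azero{}--\atwo{}) on $f=\phi^{-1}(\cdot,1/|Q|)\chi_Q$, the other from Lemma \ref{lem:norm-of-ball} together with $\phi^{-1}(\phi^\ast)^{-1}\approx t$ and Jensen's inequality. The paper itself only cites \cite[Proposition 4.4.11]{HarH_book} with precisely this sketch, so your write-up is a faithful (and complete) expansion of the intended proof.
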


The following assumptions will always be repeated when discussing fractional maximal operator or Riesz potential. Equivalently, we could assume that $\psi$ satisfies \azero{}, \aone{}, \atwo{}, \ainc{q} and \adec{1/(r-\alpha/n)}.
\begin{assumptions}\label{assu:fractional}
Let $1 < p \leq q < \infty$, $\frac{\alpha}{n} = \frac{1}{p} - \frac{1}{q}$ and $\phi, \psi \in \Phi_w(\Rn)$ be such that
\begin{equation}\label{eq:phi-psi}
\phi^{-1}(x,t) \approx  t^{\frac{\alpha}{n}}\psi^{-1}(x,t).
\end{equation}
Let $r \in  (\frac{\alpha}{n}, \frac{1}{p}]$. Assume that $\phi$ satisfies \azero{}, \aone{}, \atwo{}, \ainc{p} and \adec{1/r}. 
\end{assumptions}

\subsection{Other function spaces and operators}

Next we introduce other function spaces appearing in this paper. Functions of bounded mean oscillation, that is functions $b \in L^{1}_\loc(\Rn)$ satisfying
\begin{align*}
\dfrac{1}{|Q|} \int_{Q} |b(x) - b_Q| \, dx < \infty, \quad \text{where} \quad b_Q = \dfrac{1}{|Q|} \int_Q b(x) \, dx
\end{align*}
for every cube $Q \subset \Rn$, play an essential role in the theory of commutators. They form a function space $\BMO(\Rn)$ when equipped with a seminorm
\begin{align*}
\|b\|_{\ast} = \sup_{Q \subset \Rn} \dfrac{1}{|Q|} \int_{Q}|b(x) - b_Q| \, dx.
\end{align*}

Additionally we define the negative part of a function $b^{-}(x):= -\min\{b(x),0\}$ and similarly the positive part $b^{+}(x) = \max\{b(x),0\}$. It immediately follows that for every $x$ we have $|b(x)|-b(x) = 2b^{-}(x)$.

The space of compactly supported and smooth functions is denoted as $C^{\infty}_0(\Omega)$. This space is dense in $L^{\phi}(\Rn)$ if $\phi$ satisfies \azero{} and \adec{} \cite[Theorem 3.7.15]{HarH_book}.

Lastly, we define various operators investigated in this paper. All of the maximal operators are derived from non-centred Hardy--Littlewood maximal function
\begin{align*}
M f(x) :=  \sup_{Q \ni x} \dfrac{1}{|Q|} \int_{Q} |f(y)| \, dy.
\end{align*}
This is comparable with the centred variant
\begin{align*}
M^{c} f(x) :=  \sup_{Q(x,r)} \dfrac{1}{|Q(x,r)|} \int_{Q(x,r)} |f(y)| \, dy
\end{align*}
as they enjoy pointwise inequalities $M^cf(x) \leq M f(x) \leq 2^{n}M^c f(x)$. We can also have a restricted maximal function in each cube $Q_0$ as
\begin{align*}
M_{Q_0}f(x) := \sup_{\substack{Q \ni x \\Q \subset Q_0}} \dfrac{1}{|Q|}\int_{Q}|f(y)| \, dy
\end{align*}
We also define the sharp maximal function as
\begin{align*}
M^{\sharp}f(x) = \sup_{Q \ni x}\dfrac{1}{|Q|}\int_{Q} |f(x) - f_{Q}| \, dx.
\end{align*}

If a measurable function $b$ is given, we define the maximal commutator as
\begin{align*}
M_b f(x) := \sup_{Q \ni x} \dfrac{1}{|Q|} \int_{Q}|b(x)-b(y)| |f(y)| \, dy.
\end{align*}
On the other hand, if a measurable function $b$ is given, the commutator of a maximal function is defined as
\begin{align*}
[M, b] f(x) := M(bf)(x) - b(x) Mf(x).
\end{align*}

For $\alpha \in (0, n)$, the main focus of this article is the fractional maximal function
\begin{align*}
M_\alpha f(x) := \sup_{Q \ni x}\dfrac{1}{|Q|^{1-\frac{\alpha}{n}}} \int_{Q} |f(y)| \, dy
\end{align*}
and its variations similar to previously given to the Hardy--Littlewood maximal function.

Closely related to fractional maximal function is the Riesz potential of a function $f$
\begin{align*}
I_\alpha f(x) := \int_{\Rn} \dfrac{f(y)}{|x-y|^{n-\alpha}} \, dy.
\end{align*}
For the Riesz potential we also have the variants
\begin{align*}
[I_\alpha, b]f(x) = I_\alpha(bf)(x) - b(x)I_\alpha f(x) \quad \text{and} \quad I_{b,\alpha}f(x) = \int_{\Rn}\dfrac{|b(x)-b(y)|}{|x-y|^{n-\alpha}}f(y) \, dy.
\end{align*}
Note that $I_{b,\alpha}$ is often denoted in the literature as $|I_\alpha,b|$ or $|b,I_\alpha|$, but to unify notation with maximal functions we use a different notation.

Boundedness of Hardy--Littlewood maximal operator in generalized Orlicz spaces with the current framework was proven by H\"ast\"o \cite{Has15}. In particular, if $\phi\in \Phi_w(\Omega)$ satisfies \azero{}, \aone{}, \atwo{} and \ainc{}, we have
\begin{align}\label{eq:M-phi-bdd}
\|Mf\|_{\phi} \leq C_M \|f\|_{\phi}
\end{align}
for all $f \in L^{\phi}(\Omega)$.

\section{Boundedness of the fractional maximal operator}

In this section we prove boundedness properties of the fractional maximal operator. The result follows also from pointwise inequality $M_\alpha f(x) \lesssim I_\alpha f(x)$ and \cite{HarH_Riesz} or from extrapolation arguments \cite{extrapolation} but here we give a direct proof for the result and simplify the presentation of function spaces.
To obtain this, we exploit the fact, that $\phi$ can be regularized to a function $\tilde \phi$, while preserving the function spaces and having equivalent norms. 
This construction was developed in \cite{HarH_Riesz} but we demonstrate the steps involved and show that our framework, Assumptions \ref{assu:fractional}, is the same. 
The main reason to consider regularized $\Phi$-function is to extend range of $t$ in \aone{} to $\left [0, \frac{1}{|Q|}\right ]$ (see Proposition 4.5 in \cite{HarH_Riesz}). 
Note that we do not assume that $\phi$ is an $N$-function as was done in \cite{HarH_Riesz}, since the critical result, $\phi^{-1}(x,t)\left (\phi^{\ast}\right )^{-1}(x,t) \approx t$, is now known to hold for weak $\Phi$-functions also.
\begin{enumerate}
\item We start with a generalized Orlicz function $\phi$ satisfying \azero{}, \aone{}, \atwo{}, \ainc{p} and \adec{1/r}, where $r \in (\frac{\alpha}{n}, \frac{1}{p}]$ and $p>1$.
\item Choose $\phi_1 \in \Phi_s(\Rn)$ such that $\phi_1(x,1)=\phi_1^{-1}(x,1)=1$ and $\phi_1 \simeq \phi$ \cite[Lemma 3.7.3]{HarH_book}.
\item Define $\phi_2(x,t) = \max \{\phi_1(x,t), 2t-1\}$. We have $\phi_2 \simeq \phi_1\simeq \phi$ and therefore $\phi_2$ satisfies \azero{}, \aone{}, \atwo{}, \ainc{p} and \adec{1/r}.
\item Define
\begin{align*}
\tilde \phi(x,t) = 
\begin{cases}
(\phi_2)_{\infty}(t) &\text{ when } t <1, \\
2 \phi_2(x,t) -1 &\text{ when } t \geq 1,
\end{cases}
\end{align*}
where $(\phi_2)_\infty(t) = \limsup_{|x| \to \infty}\phi_2(x,t)$.
\end{enumerate}
In \cite[Proposition 4.2]{HarH_Riesz}  it was shown that $L^{\phi}(\Rn) = L^{\tilde \phi}(\Rn)$ with comparable norms and thus $\phi \sim \tilde \phi$.

\cite[Proposition 4.5]{HarH_Riesz} shows that the if $\phi$ satisfies \azero{}, \aone{} and \atwo{}, then so does $\tilde \phi$ (condition \azero{} follows automatically, since $\tilde \phi(x,1)=1$ by construction). We add to this by noting that regularization preserves also \ainc{} and \adec{}: for $t<s<1$ we have
\begin{align*}
\dfrac{\tilde \phi(x,t)}{t^p} = \limsup_{|x| \to \infty} \dfrac{\phi_2(x,t)}{t^{p}} \lesssim \limsup_{|x| \to \infty} \dfrac{\phi_2(x,s)}{s^{p}} = \dfrac{\tilde \phi(x,s)}{s^{p}},
\end{align*} 
for $1<t<s$ we have
\begin{align*}
\dfrac{\tilde \phi(x,t)}{t^{p}} &= \dfrac{2 \phi_2(x,t)-1}{t^{p}} \lesssim \dfrac{2\phi_2(x,s)}{s^p} \leq 2\dfrac{2\phi_2(x,s)-1}{s^p} = 2 \dfrac{2 \tilde \phi(x,s)}{s^{p}}
\end{align*}
and finally for $t<1<s$ \ainc{} follows since $\tilde \phi$ is increasing. \adec{} is proven similarly.

There is still the question of relation between regularizing and satisfying Assumptions \ref{assu:fractional}. Since $\tilde{\phi}$ satisfies these assumptions, then by \cite[Lemmas 5.2.3 and 3.7.3]{HarH_book} these exists $\psi_0 \in \Phi_s(\Omega)$  satisfying \ainc{q} and \adec{1/(r-\alpha/n)} such that $\tilde \phi^{-1}(x,t) \approx t^{\frac{\alpha}{n}} \psi_{0}^{-1}(x,t)$.
Let us show that $\psi_0 \sim \psi$ when $\phi^{-1}(x,t) \approx t^{\frac{\alpha}{n}} \psi^{-1}(x,t)$. Equivalently this means that $\psi_0^{-1}(x,t) \approx \psi^{-1}(x, t+h(x))$, where $h \in L^{1}(\Rn)$.  Since \adec{n/\alpha} of $\phi$ implies \ainc{\alpha/n} of $\phi^{-1}$ and we know that $\phi \sim \tilde \phi$, we estimate
\begin{align*}
\psi_0^{-1}(x,t) &\approx \dfrac{\tilde \phi^{-1}(x,t)}{t^{\alpha/n}} \lesssim \dfrac{\tilde \phi^{-1}(x, t + h(x))}{[t + h(x)]^{\alpha/n}} \approx \dfrac{\phi^{-1}(x, t + 2h(x))}{[t + h(x)]^{\alpha/n}} \lesssim \dfrac{\phi^{-1}(x, t + 2h(x))}{[t +2h(x)]^{\alpha/n}} \\
&\approx \psi^{-1}(x, t + 2h(x)),
\end{align*}
i.e. $\psi_0 \sim \psi$. From now on, we denote $\tilde \psi := \psi_0$ since it possesses all the relevant properties of a regularized function: weak equivalence, \azero{} and \aone{} in range $t \in \left [0, \frac{1}{|Q|}\right ]$.

\begin{rem}\label{rem:Riesz-space}
In the paper of Harjulehto and H\"ast\"o, the target space of $I_\alpha$ was constructed in a slightly different way than in our presentation. Let us show that they are in fact equivalent: 
We show that under suitable structural conditions on $\phi$, the Riesz potential maps functions from $L^{\phi}(\Rn) \to L^{\psi}(\Rn)$, where $\phi$ and $\psi$ satisfy the relation $\phi^{-1}(x,t) \approx t^{\frac{\alpha}{n}} \psi^{-1}(x,t)$. In \cite{HarH_Riesz} the function target space is $L^{\phi_\alpha^{\sharp}}(\Rn)$, where $\phi^{\sharp}$ is a convex $\Phi$-function equivalent to $\phi(x, \lambda^{-1}(x,t))$ and $\lambda(x,t) := t \phi(x,t)^{-\frac{\alpha}{n}}$. Recall that equivalence of $\Phi$-functions is an equivalent condition to comparability of their inverses. The claim is invariant under equivalence of $\Phi$-functions so we can assume that $\phi, \psi \in \Phi_s(\Omega)$ and therefore by Lemma \ref{lem:composition} (a)
\begin{align*}
\big(\phi_\alpha^{\sharp}\big)^{-1}(x,t) \approx \lambda(x,\phi^{-1}(x,t)) = \phi^{-1}(x,t) \ \phi(x, \phi^{-1}(x,t))^{-\frac{\alpha}{n}} = t^{-\frac{\alpha}{n}}\phi^{-1}(x,t) \approx \psi^{-1}(x,t).
\end{align*}
Thus $\phi_{\alpha}^{\sharp} \simeq \psi$ and they generate the same function spaces with comparable norms and we resume to denote $\psi$ for the function generating the target space. 
\end{rem}

We collect these observations to the following Lemma. Note that the now \aone{} is valid for cubes of all size, not just those with $|Q|\leq 1$.
\begin{lem}\label{lem:reg-psi}
Let $\phi$ satisfy Assumptions \ref{assu:fractional}. Then there exist $\tilde \phi, \psi, \tilde \psi \in \Phi_s(\Omega)$ such that 
\begin{equation*}
\phi \sim \tilde \phi, \quad \psi \sim \tilde \psi,\quad \phi^{-1}(x,t) \approx t^{\frac{\alpha}{n}} \psi^{-1}(x,t), \quad \tilde \phi^{-1}(x,t) \approx t^{\frac{\alpha}{n}} \tilde \psi^{-1}(x,t)
\end{equation*}
and 
\begin{equation*}
\tilde{\psi}^{-1}(x,t) \lesssim \tilde{\psi}^{-1}(y,t)
\end{equation*}
for every cube $Q$, almost every $x, y \in Q$ and every $t \in \left [0, \frac{1}{|Q|}\right ]$.
\end{lem}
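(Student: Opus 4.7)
The plan is to read off the three $\Phi$-functions $\tilde\phi$, $\psi$, $\tilde\psi$ from the constructions already sketched in the paragraphs preceding the statement and then verify each claimed property by citing results from \cite{HarH_book} and \cite{HarH_Riesz}. First I would build $\tilde\phi$ through the four-step recipe in the enumerate above: produce $\phi_1 \in \Phi_s(\Omega)$ with $\phi_1 \simeq \phi$ via \cite[Lemma 3.7.3]{HarH_book}, set $\phi_2(x,t) = \max\{\phi_1(x,t),\, 2t-1\}$ which is still equivalent to $\phi$, and finally let
\[
\tilde\phi(x,t) = \begin{cases}(\phi_2)_\infty(t), & t<1,\\ 2\phi_2(x,t)-1, & t\geq 1.\end{cases}
\]
By \cite[Proposition 4.2]{HarH_Riesz} we have $\phi\sim\tilde\phi$ with comparable norms. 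Continuity across $t=1$ and convexity are immediate from the construction, so $\tilde\phi \in \Phi_s(\Omega)$. Preservation of \azero{}, \aone{}, \atwo{} is \cite[Proposition 4.5]{HarH_Riesz}, while the two case-by-case estimates displayed just before the Lemma already verify that \ainc{p} and \adec{1/r} pass from $\phi_2$ to $\tilde\phi$.

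For the companion pair I would use the hypothesis $\phi^{-1}(x,t) \approx t^{\alpha/n}\psi^{-1}(x,t)$ to identify $\psi$ and then replace it with an equivalent strong $\Phi$-function through \cite[Lemma 3.7.3]{HarH_book}; the inverse relation, being stated up to $\approx$, is preserved. To obtain $\tilde\psi$, I would apply \cite[Lemma 5.2.3]{HarH_book} to $\tilde\phi$: since $\tilde\phi$ satisfies \ainc{p} and \adec{1/r} with $r > \alpha/n$, there is $\psi_0 \in \Phi_w(\Omega)$ with \ainc{q}, \adec{1/(r-\alpha/n)} and $\tilde\phi^{-1}(x,t) \approx t^{\alpha/n}\psi_0^{-1}(x,t)$. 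Upgrading $\psi_0$ to an equivalent element of $\Phi_s(\Omega)$ via \cite[Lemma 3.7.3]{HarH_book} supplies the required $\tilde\psi$.

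To verify $\psi \sim \tilde\psi$ I would reproduce the chain of comparisons displayed just above the Lemma: $\phi \sim \tilde\phi$ produces $\tilde\phi^{-1}(x,t) \approx \phi^{-1}(x, t + h(x))$ for some $h \in L^1(\Omega) \cap L^\infty(\Omega)$, while \adec{n/\alpha} of $\tilde\phi$ (which follows from \adec{1/r} together with Remark \ref{rem:assumptions}(d), since $1/r \leq n/\alpha$) is equivalent by Remark \ref{rem:assumptions}(c) to \ainc{\alpha/n} of $\tilde\phi^{-1}$; the latter absorbs the additive error $h(x)$ into the factor $t^{\alpha/n}$, yielding $\psi_0^{-1}(x,t) \lesssim \psi^{-1}(x, t + 2h(x))$ and hence $\psi \sim \tilde\psi$ after running the symmetric argument. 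The last claim is \cite[Proposition 4.5]{HarH_Riesz} applied to $\tilde\psi$: regularization extends the range of $t$ in \aone{} to $[0, 1/|Q|]$ and removes the restriction $|Q| \le 1$; translating this \aone{} into the inverse via Remark \ref{rem:assumptions}(c) produces exactly the asserted comparison $\tilde\psi^{-1}(x,t) \lesssim \tilde\psi^{-1}(y,t)$.

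The main obstacle is the bookkeeping in the $\psi \sim \tilde\psi$ step: one must simultaneously absorb a multiplicative factor $t^{\alpha/n}$ and an additive $L^1 \cap L^\infty$ perturbation of the argument, and closure of that manipulation relies precisely on \ainc{\alpha/n} of the inverse $\tilde\phi^{-1}$. This is the only non-cosmetic estimate in the proof; the rest reduces to quoting \cite[Lemmas 3.7.3, 5.2.3]{HarH_book} and \cite[Propositions 4.2, 4.5]{HarH_Riesz} and invoking Remark \ref{rem:assumptions}.
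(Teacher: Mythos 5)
Your proposal is correct and follows essentially the same route as the paper, which itself presents this lemma as a collection of the observations in the preceding paragraphs (the four-step regularization, Lemma 5.2.3 of \cite{HarH_book} for $\psi_0$, the chain of inverse estimates for $\psi\sim\psi_0$, and Proposition 4.5 of \cite{HarH_Riesz} for the extended \aone{} range). Two cosmetic points: the final comparison needs no translation via Remark \ref{rem:assumptions}(c) since \aone{} is already a statement about $\phi^{-1}$, and the extended range for $\tilde\psi$ is inherited from that of $\tilde\phi$ through $\tilde\psi^{-1}(x,t)\approx t^{-\alpha/n}\tilde\phi^{-1}(x,t)$ rather than by applying the regularization proposition to $\tilde\psi$ directly.
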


The first main result is a direct proof, without extrapolation arguments, that the fractional maximal operator is bounded. 

\begin{thm}\label{thm:M_alpha-bounded}
Let $1 < p \leq q < \infty$, $\frac{\alpha}{n} = \frac{1}{p} - \frac{1}{q}$ and $\phi, \psi \in \Phi_w(\Rn)$ be such that
\begin{equation*}
\phi^{-1}(x,t) \approx  t^{\frac{\alpha}{n}}\psi^{-1}(x,t).
\end{equation*}
Let $r \in  (\frac{\alpha}{n}, \frac{1}{p}]$. Assume that $\phi$ satisfies \azero{}, \aone{}, \atwo{}, \ainc{p} and \adec{1/r}. Then the fractional maximal operator $M_\alpha: L^{\phi}(\Rn) \to L^{\psi}(\Rn)$ is bounded. 
\end{thm}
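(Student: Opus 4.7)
My plan is to prove the theorem by a generalized--Orlicz version of Hedberg's inequality. The first step is to apply Lemma~\ref{lem:reg-psi} to replace $\phi$ and $\psi$ by weakly equivalent strong $\Phi$-functions $\tilde\phi,\tilde\psi \in \Phi_s(\Rn)$. Weak equivalence produces the same generalized Orlicz spaces with comparable norms, so the theorem for the regularized data implies the theorem as stated, and this reduction grants the two properties that drive the argument: \aone{} now holds on cubes of every size (so that $\fint_Q \phi^{-1}(y, 1/|Q|)\,dy \approx \phi^{-1}(x, 1/|Q|)$ for every $Q$ and a.e.\ $x \in Q$), and the identity $\phi(x, \phi^{-1}(x,t)) = t$ from Lemma~\ref{lem:composition}(a) is available. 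I may therefore assume from the outset that $\phi,\psi \in \Phi_s(\Rn)$, that $f \geq 0$, and that $\|f\|_\phi \leq 1$.

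Fix $x \in \Rn$ and a cube $Q \ni x$. I would produce two complementary bounds on the average $|Q|^{\alpha/n}\fint_Q f$. The trivial ``small-cube'' estimate is $|Q|^{\alpha/n}\fint_Q f \leq |Q|^{\alpha/n} Mf(x)$. The ``large-cube'' estimate follows by chaining H\"older's inequality (Lemma~\ref{lem:holder}) with Lemma~\ref{lem:norm-of-ball} and Lemma~\ref{lem:measure-scaling}:
\[
\fint_Q f \;\lesssim\; \frac{\|f\|_\phi}{\|\chi_Q\|_\phi} \;\approx\; \|f\|_\phi \fint_Q \phi^{-1}(y, \tfrac{1}{|Q|})\,dy \;\approx\; \phi^{-1}(x, \tfrac{1}{|Q|}),
\]
and then multiplying by $|Q|^{\alpha/n}$ and invoking $\phi^{-1}(x,t) \approx t^{\alpha/n}\psi^{-1}(x,t)$ from \eqref{eq:phi-psi} at $t = 1/|Q|$, which gives $|Q|^{\alpha/n} \fint_Q f \lesssim \psi^{-1}(x, 1/|Q|)$.

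The small-cube bound is increasing in $|Q|$ and the large-cube bound is decreasing, so their pointwise minimum is maximized at the crossover $|Q|^{\ast} \approx 1/\phi(x, Mf(x))$; a short manipulation using Lemma~\ref{lem:composition}(a) and \eqref{eq:phi-psi} shows that this crossover value is, up to constants, $\psi^{-1}(x, \phi(x, Mf(x)))$. Taking the supremum over $Q \ni x$ thus yields the pointwise Hedberg-type inequality
\[
M_\alpha f(x) \;\lesssim\; \psi^{-1}\!\big(x, \phi(x, Mf(x))\big), \qquad \text{equivalently} \qquad \psi(x,\, c\, M_\alpha f(x)) \;\lesssim\; \phi(x, Mf(x)),
\]
for a constant $c>0$ depending only on the data. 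To conclude, I would integrate this estimate: the boundedness of $M$ on $L^\phi(\Rn)$ from \eqref{eq:M-phi-bdd} together with \adec{1/r} of $\phi$ gives $\int \phi(x, Mf(x))\,dx \lesssim \int \phi(x, f(x))\,dx \leq 1$, and the resulting modular bound $\int \psi(x, c M_\alpha f(x))\,dx \lesssim 1$ upgrades to the desired norm bound $\|M_\alpha f\|_\psi \lesssim 1$ by \adec{} of $\psi$.

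The main obstacle is the large-cube estimate: as originally stated, \aone{} controls $\phi^{-1}(y,t)$ against $\phi^{-1}(x,t)$ only for $t \in [1, 1/|Q|]$, so for large cubes the value $t = 1/|Q|$ falls below $1$ and the passage $\fint_Q \phi^{-1}(y, 1/|Q|)\,dy \approx \phi^{-1}(x, 1/|Q|)$ would be unjustified. This is exactly what the regularization in Lemma~\ref{lem:reg-psi} (and the long discussion preceding it) is set up to fix, and I expect much of the technical care in the proof to lie in managing the weak-equivalence constants and the $L^1$-integrable error terms that the regularization introduces.
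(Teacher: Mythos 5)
Your proposal is correct and follows essentially the same route as the paper: a Hedberg-type pointwise estimate obtained by regularizing $\phi,\psi$ via Lemma~\ref{lem:reg-psi}, bounding large cubes through H\"older's inequality, Lemma~\ref{lem:norm-of-ball} and the extended \aone{}, optimizing the cube size so that $M_\alpha f(x)\lesssim \|f\|_{\tilde\phi}\,\tilde\psi^{-1}(x,\tilde\phi(x,Mf(x)/(C\|f\|_{\tilde\phi})))$, and then integrating the resulting modular inequality using \eqref{eq:M-phi-bdd}. The only (cosmetic) difference is that the paper splits $f$ into near and far parts relative to a cube $Q(x,r)$ and optimizes over $r$, whereas you bound each cube's average by the minimum of the two monotone bounds and locate the crossover directly.
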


\begin{proof}
Recall the centred fractional maximal function
\begin{align*}
 M^c_\alpha f(x) := \sup_{r>0} \dfrac{1}{|Q(x,r)|^{1-\alpha/n}} \int_{Q(x,r)} |f(y)| \, dy
\end{align*}
and recall that $M^c_\alpha f(x) \approx  M_\alpha f(x)$. Thus, we can prove the boundedness for the centred fractional maximal operator.

Let $\Phi$-functions $\tilde \phi$ and $\tilde \psi$ be as in Lemma \ref{lem:reg-psi}.
Let $Q(x,r)$ be any cube and split a function $f$ as
\begin{equation*}
f= f_1 + f_2, \quad f_1(y) = f(y)\chi_{Q(x,r)}(y), \quad f_2(y)=f(y)\chi_{Q(x,r)^{c}}(y).
\end{equation*}
By sublinearity of the maximal operator we have $M^c_\alpha f(x) \leq M^c_\alpha f_1(x) + M_\alpha^c f_2(x)$. We start by estimating the last term
\begin{align*}
M_\alpha^c f_2(x) &\lesssim \sup_{t>0} \dfrac{1}{|Q(x,t)|^{1-\alpha/n}} \int_{Q(x,t) \cap Q(x,r)^{c}} |f(y)| \, dy \\
&\lesssim \sup_{t >r} \dfrac{1}{|Q(x,t)|^{1-\alpha/n}} \int_{Q(x,t)} |f(y)| \, dy.
\end{align*}
We continue the estimates with H\"older's inequality, Lemma \ref{lem:norm-of-ball} and Lemma \ref{lem:composition} (b)
\begin{align*}
M^c_\alpha f_2(x)&\lesssim \sup_{t >r} \|f\|_{\tilde \phi} |Q(x,t)|^{\alpha/n-1} \|\chi_{Q(x,t)}\|_{\tilde \phi^\ast}\\
&\lesssim \sup_{t >r} \|f\|_{\tilde \phi} |Q(x,t)|^{\alpha/n-1} \left (\fint_{Q(x,t)} \left (\tilde \phi^{\ast}\right )^{-1}(y,|Q(x,t)|^{-1})\, dy\right )^{-1} \\
&\lesssim \sup_{t >r} \|f\|_{\tilde \phi} |Q(x,t)|^{\alpha/n-1} \left (\fint_{Q(x,t)} \dfrac{1}{|Q(x,t)|}  \dfrac{1}{\tilde \phi^{-1}(y,|Q(x,t)|^{-1})} \, dy\right )^{-1}.
\end{align*}
Using convexity of the function $z \mapsto \frac{1}{z}$ with Jensen's inequality and Lemma \ref{lem:reg-psi} we continue
\begin{align*}
M^c_\alpha f_2(x) &\lesssim \sup_{t >r}  \|f\|_{\tilde \phi} |Q(x,t)|^{\alpha/n} \fint_{Q(x,t)} \tilde \phi^{-1}(y,|Q(x,t)|^{-1}) \, dy \\
&\lesssim \|f\|_{\tilde \phi} \sup_{t>r} t^{\alpha}  \fint_{Q(x,t)} \tilde \phi^{-1}(y,t^{-n}) \, dy \\
&\leq\|f\|_{\tilde\phi} \sup_{t>r}   \tilde\psi^{-1}(x,t^{-n}) \leq \|f\|_{\tilde\phi}   \tilde\psi^{-1}(x,r^{-n}).
\end{align*}
For $f_1$ we have $M^c_\alpha f_1(x) \lesssim r^{\alpha} M^cf(x) \approx r^{\alpha}Mf(x)$ (see for example \cite[Lemma. (b)]{Hed72}), so returning to non-centred maximal functions we see
\begin{align*}
|M_\alpha f(x)| &\lesssim r^{\alpha} Mf(x) +\|f\|_{\tilde \phi} \tilde\psi^{-1}(x,r^{-n}) \\
&\lesssim Mf(x) \dfrac{\tilde \psi^{-1}(x,r^{-n})}{\tilde \phi^{-1}(x,r^{-n})} + \|f\|_{\phi}  \tilde\psi^{-1}(x,r^{-n}).
\end{align*}
Recall that $\tilde \phi \in \Phi_s(\Rn)$ so it is continuous and surjective from $[0,\infty)$ to $[0, \infty)$. Thus we can choose $r$ in such a way that $\tilde\phi^{-1}(x,r^{-n}) = \frac{Mf(x)}{C_M\|f\|_{\tilde\phi}}$, where $C_M$ is the constant in \eqref{eq:M-phi-bdd}, and get
\begin{align*}
|M_\alpha f(x)| \lesssim \|f\|_{\tilde\phi}  \tilde\psi^{-1}\left (x, \tilde \phi\left (x, \dfrac{Mf(x)}{C_M \|f\|_{\tilde\phi}}\right )\right ).
\end{align*}
Since $\tilde \psi$ is increasing and a strong $\Phi$-function, which allows us to use Lemma \ref{lem:composition} (a), it follows that
\begin{align*}
\tilde\psi\left (x, \dfrac{|M_\alpha f(x)|}{C_M\|f\|_{\tilde\phi}} \right ) &\lesssim \tilde\psi\left (x,\tilde\psi^{-1}\left (x,\tilde\phi\left (x, \frac{Mf(x)}{C_M\|f\|_{\tilde\phi}}\right )\right )\right ) = \tilde \phi\left (x, \dfrac{Mf(x)}{C_M \|f\|_{\tilde \phi}}\right ).
\end{align*} 
Integrating both sides we have
\begin{align*}
\int_{\Rn}\tilde\psi\left (x, \dfrac{|M_\alpha f(x)|}{C_M\|f\|_{\tilde\phi}} \right ) \, dx \lesssim \int_{\Rn} \tilde \phi\left (x, \dfrac{Mf(x)}{C_M \|f\|_{\tilde \phi}}\right ) \, dx \leq 1.
\end{align*}
In other words $\|M_\alpha f\|_{\tilde \psi} \lesssim \|f\|_{\tilde \phi}$. But $\tilde \phi$ and $\tilde \psi$ are weakly equivalent to $\phi$ and $\psi$ so they have comparable norms, respectively, and all in all
\begin{equation*}
\|M_\alpha f\|_{L^{\psi}(\Rn)} \lesssim \|f\|_{L^{\phi}(\Rn)}. \qedhere
\end{equation*}
\end{proof}

\begin{rem}
Contrary to regular Orlicz case, we do not know if the assumption $\phi^{-1}(x,t) \approx t^{\frac{\alpha}{n}}\psi^{-1}(x,t)$ is necessary. To best of our knowledge, this is an open question already in the variable exponent case. Following the proof of Orlicz case in \cite{orlicz1}, the problem arises as $\|\chi_Q\|_{\phi}$ cannot always be estimated pointwise as $\left (\phi^{-1}(x, |B|^{-1})\right )^{-1}$ but only in the integral from as in Lemma \ref{lem:norm-of-ball}.
\end{rem}

\section{Commutators of Riesz potential}

It turns out that the boundedness of $[I_\alpha,b]$ can be tackled with the sharp maximal function. Our methods are based on \cite{p(x)-riesz}. There is no pointwise inequality between between a function $f$ and the sharp maximal function $M^{\sharp}f$ but a norm estimate can be achieved as is done in Lemma \ref{lem:sharp-norm}. However, to prove this we need to investigate smooth functions with compact support and vanishing integral. We denote
\begin{align*}
C^{\infty}_{0,0}(\Omega) = \left \{ f \in C^{\infty}_0(\Omega) : \int_{\Omega} f(x) \, dx = 0\right \}.
\end{align*}
For bounded domains $\Omega$ we define $L^{\phi}_0(\Omega)$ analogously but for for domains with $|\Omega|=\infty$ we define $L^{\phi}_0(\Omega) = L^{\phi}(\Rn)$.
\begin{lem}
Assume that $\phi \in \Phi_w(\Omega)$ satisfies \azero{} and \adec{q}. If $|\Omega| < \infty$ or $\phi$ satisfies \ainc{p} with $p>1$, then $C^{\infty}_{0,0}(\Omega)$ is dense in $L^{\phi}_0(\Omega)$.
\end{lem}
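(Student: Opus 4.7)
The plan is a standard two-step approximation: first use the density result stated just before the lemma (which needs \azero{} and \adec{}) to approximate $f \in L^\phi_0(\Omega)$ by some $g \in C_0^\infty(\Omega)$ in the $L^\phi$ norm; then correct the mean of $g$ to zero. Concretely, with $\eta \in C_0^\infty(\Omega)$ satisfying $\int_\Omega \eta = 1$ and $c_g := \int_\Omega g$, define $\tilde g := g - c_g\,\eta \in C_{0,0}^\infty(\Omega)$. The triangle inequality reduces the whole task to controlling the correction $|c_g|\,\|\eta\|_{L^\phi(\Omega)}$, and the argument then splits on whether $|\Omega|$ is finite or not.

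For $|\Omega| < \infty$, I would first use \azero{} together with the $\Phiw$ almost-monotonicity of $\phi(x,t)/t$ to deduce a uniform lower bound $\phi(x,1) \gtrsim 1$, hence $\phi(x,t) \gtrsim t$ for $t \geq 1$. This gives the embedding $L^\phi(\Omega) \hookrightarrow L^1(\Omega)$ (split at $|f|=1$; the tail is controlled by the $\phi$-modular, the rest by $|\Omega|$). Consequently an approximating sequence $g_n \to f$ in $L^\phi$ also converges in $L^1$, so $c_{g_n} = \int g_n \to \int f = 0$, and a single fixed bump $\eta$ is enough.

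For $|\Omega| = \infty$, the integrals $c_g$ need not be small, so $\eta$ must have small $L^\phi$ norm and depend on the approximation step. Here \ainc{p} with $p > 1$ enters decisively: together with \adec{q} and \azero{} it yields $\phi(x,1) \approx 1$ uniformly, and then the bound $\phi(x,t) \leq L_p t^p \phi(x,1) \lesssim t^p$ for $t \leq 1$. Pick a cube $Q_R \subset \Omega$ of side $R$ (or a finite disjoint union of cubes in $\Omega$ of total measure $\asymp R^n$ when the geometry of $\Omega$ rules out large cubes) and a bump $\eta_R \in C_0^\infty(\Omega)$ with $\int \eta_R = 1$ and $0 \leq \eta_R \lesssim R^{-n}\chi_{Q_R}$. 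A direct modular estimate gives
\[
\int_\Omega \phi\Big(x, \frac{\eta_R(x)}{\lambda}\Big)\,dx \lesssim R^n (R^{-n}/\lambda)^p = \lambda^{-p}\,R^{n(1-p)} \to 0 \text{ as } R \to \infty,
\]
so $\|\eta_R\|_{L^\phi(\Omega)} \to 0$. Picking $R = R_n$ large enough at each step makes $|c_{g_n}|\,\|\eta_{R_n}\|_{L^\phi(\Omega)}$ as small as desired.

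The main obstacle is the unbounded case, which genuinely requires $p > 1$: if only the automatic \ainc{1} held (take for example $\phi(x,t) = t$), then $\|\eta_R\|_{L^\phi} \gtrsim \|\eta_R\|_{L^1} = 1$ and no correction can be made small. The subordinate geometric question of fitting $\eta_R$ inside an awkward $\Omega$ is minor, since the modular estimate above only needs the total support to have measure of order $R^n$ and the $L^\infty$ bound to be of order $R^{-n}$, so distributing the mass of $\eta_R$ over many small pieces of $\Omega$ works equally well.
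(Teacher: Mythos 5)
Your proposal is correct and follows essentially the same route as the paper: approximate by $C_0^\infty$ functions and subtract a multiple of a bump $\eta$ to kill the mean, handling $|\Omega|<\infty$ via the embedding $L^\phi(\Omega)\hookrightarrow L^1(\Omega)$ (so the means already tend to zero) and $|\Omega|=\infty$ via bumps spread over sets of measure $\asymp R^n$ whose $L^\phi$-norms vanish as $R\to\infty$, which is exactly where \ainc{p} with $p>1$ enters. The only differences are cosmetic: you bound $\|\eta_R\|_{L^\phi}$ by a direct modular computation instead of invoking Lemma \ref{lem:crude}, and you absorb the finite numbers $c_{g_n}$ by choosing $R_n$ a posteriori rather than estimating them by H\"older's inequality as the paper does.
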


\begin{proof}
Let us first investigate the case of a bounded domain. Let $\eta \in C^{\infty}_0(\Omega)$ satisfy $\int_{\Omega}\eta(x) \, dx = 1$. Since $\phi$ satisfies \azero{} and \adec{}, compactly supported smooth functions are dense in $L^{\phi}_0(\Rn) \subset L^{\phi}(\Omega)$, and therefore we have a sequence $(\tilde f_k) \subset C_0^{\infty}(\Omega)$ such that $\tilde f_k \to f$ in $L^{\phi}(\Omega)$. H\"older's inequality implies that $\|f-\tilde{f_k}\|_{L^{1}(\Omega)} \lesssim \|f-\tilde{f_k}\|_{\phi} \|\chi_\Omega\|_{\phi^{\ast}}$ so $(\tilde f_k)$ converges to $f$ also in $L^{1}(\Omega)$ and
\begin{align*}
\int_{\Omega} \tilde f_k(x) \, dx \to \int_{\Omega} f(x) \, dx = 0.
\end{align*}
Now we define $f_k := \tilde{f_k} - \eta \int_{\Omega} \tilde f_k(x)\, dx$ and notice that $f_k \in C^{\infty}_{0,0}(\Omega)$ and
\begin{align*}
\|f-f_k\|_{\phi} \leq \|f-f_k\|_{\phi} + \|\eta\|_{\phi} \left |\int_{\Omega} \tilde{f_k}(x) \, dx \right |.
\end{align*}
Letting $k \to \infty$, we see that $\|f-f_k\|_{\phi}\to 0$ and the proof for bounded $\Omega$ is complete.

Suppose then that $|\Omega| = \infty$ and $\phi$ satisfies \ainc{}. Let $\{\Omega_j\}$ be an increasing sequence of bounded domains satisfying 
\begin{align*}
\Omega_j \Subset \Omega \quad \text{with} \quad \bigcup_{j=1}^{\infty}\Omega_j = \Omega \quad \text{and} \quad |\Omega_j| \geq 1.
\end{align*}
For each $\Omega_j$, associate a non-negative function $\eta_j \in C^{\infty}_0(\Omega)$ satisfying $\int_{\Omega_j} \eta_j(x) \, dx  =1$ and $\eta_j(x) \lesssim |\Omega_j|^{-1}\chi_{\Omega_j}$. This can be done by mollifying functions $|\Omega_j|^{-1}\chi_{\Omega_j}$ for example. From the pointwise inequality and Lemma \ref{lem:crude} we get
\begin{align}\label{eq:vanishing1}
\|\eta_j\|_{\phi} \lesssim |\Omega_j|^{-1}\|\chi_{\Omega_j}\|_{\phi} \lesssim \max\{ |\Omega_j|^{-1 + \frac{1}{p}}, |\Omega_j|^{-1+\frac{1}{q}}\} \to 0,
\end{align}
when $j \to 0$. By density of $C^{\infty}_0(\Omega)$ there exists functions $(\tilde{f_k}) \subset C^{\infty}_0(\Omega)$ such that $\tilde f_k \to f$ in $L^{\phi}(\Omega)$ and $\|\tilde{f}_k\|_{\phi} \leq \|f\|_{\phi} +1$. Similarly to the case of bounded domain, we define $f_k := \tilde{f_k} - \eta_{j_k} \int_{\Omega_k} \tilde f_k(x)\, dx$, where $j_k$ is an increasing sequence chosen later. By construction 
\begin{align*}
\int_{\Omega} f_k(x) \, dx = \int_{\Omega} \tilde f_k(x) \, dx - \int_{\Omega_{j_k}} \eta_{j_k} (x) \, dx \int_{\Omega} \tilde f_k(x) \, dx = 0,
\end{align*}
so $(f_k) \subset C^{\infty}_{0,0}(\Omega)$. H\"older's inequality for the integral term inside the norm yields
\begin{align*}
\|f-f_k\|_{\phi} &\lesssim \|f-\tilde{f_k}\|_{\phi} + \|\eta_{j_k}\|_{\phi}\|\chi_{\Omega_{k}}\|_{\phi^{\ast}}\|\tilde f_k\|_{\phi} \\
&\leq \|f-\tilde{f_k}\|_{\phi} + \|\eta_{j_k}\|_{\phi}\|\chi_{\Omega_{k}}\|_{\phi^{\ast}}(\|f\|_{\phi} +1).
\end{align*}
The first term on the right-hand side converges to $0$ when $k \to \infty$. On the other hand, since all domains $\Omega_k$ have finite measure, \azero{} implies that $\chi_{\Omega_k} \in L^{\phi^{\ast}}(\Omega)$ \cite[Theorem 3.7.13]{HarH_book}. Exploiting \eqref{eq:vanishing1}, we can choose the sequence $(j_k)$ in such a way that $\|\eta_{j_k}\|_{\phi} \|\chi_{\Omega_k}\|_{\phi^{\ast}} \leq 2^{-k}$. This choice ensures us that the last term on the previous display converges to $0$ as $k \to \infty$. All together, we have shown that $f_k \to f$ in $L^{\phi}(\Omega)$.\end{proof}
 
Let us now prove the required inequality for the sharp maximal function.
\begin{lem}\label{lem:sharp-norm}
If $\phi \in \Phi_w(\Omega)$ satisfies \azero{}, \aone{}, \atwo{}, \ainc{} and \adec{}, then
\begin{align*}
\|f\|_{L^{\phi}(\Rn)} \lesssim \|M^{\sharp}f\|_{L^{\phi}(\Rn)}
\end{align*}
for all $f \in L^{\phi}(\Rn)$.
\end{lem}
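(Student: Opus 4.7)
The plan is to combine duality with the density result just established. Since $L^{\phi}_0(\Rn) = L^{\phi}(\Rn)$ by definition, the preceding lemma gives density of $C^{\infty}_{0,0}(\Rn)$ in $L^{\phi}(\Rn)$. Both sides of the asserted inequality are continuous under $L^{\phi}$-convergence: the left trivially, and the right because $|M^{\sharp}f - M^{\sharp}g| \le M^{\sharp}(f-g) \le 2 M(f-g)$ pointwise, combined with \eqref{eq:M-phi-bdd} (available under the stated hypotheses on $\phi$), yields $\|M^{\sharp}f - M^{\sharp}g\|_{\phi} \lesssim \|f-g\|_{\phi}$. It therefore suffices to prove the inequality for $f \in C^{\infty}_{0,0}(\Rn)$.

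For such $f$, I would invoke the norm conjugate formula (Lemma \ref{lem:norm-conjugate}) to reduce the task to bounding $\int_{\Rn} |f g| \, dx$ uniformly for $g$ with $\|g\|_{L^{\phi^*}(\Rn)} \le 1$. The crux is the Fefferman--Stein-type integral inequality
\[
\int_{\Rn} |f(x)| |g(x)| \, dx \lesssim \int_{\Rn} M^{\sharp}f(x) \, Mg(x) \, dx,
\]
valid for $f \in C^{\infty}_{0,0}(\Rn)$ and any locally integrable $g$. Its proof rests on a Calder\'on--Zygmund decomposition of $g$ at dyadic heights together with a telescoping expansion of $f$ across a nested sequence of cubes; the compact support and vanishing integral of $f$ are exactly what is needed to eliminate the leading averages $f_Q$ in the telescoping. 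This pointwise-integrated inequality is the main obstacle, since the non-autonomous structure of $\phi$ plays no role at this step -- the argument is essentially the classical one and is the reason the previous density lemma had to be proved with mean-zero smooth approximants.

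Given the key inequality, the proof closes by H\"older (Lemma \ref{lem:holder}) together with boundedness of $M$ on $L^{\phi^*}(\Rn)$. The latter is \eqref{eq:M-phi-bdd} applied to $\phi^*$: by Remark \ref{rem:assumptions}(b) the conjugate inherits \azero{}, \aone{}, \atwo{}, while \adec{} of $\phi$ translates to \ainc{} of $\phi^*$, so $M$ is bounded on $L^{\phi^*}(\Rn)$. Consequently
\[
\int_{\Rn} |f(x)||g(x)| \, dx \lesssim \|M^{\sharp}f\|_{L^{\phi}(\Rn)} \|Mg\|_{L^{\phi^*}(\Rn)} \lesssim \|M^{\sharp}f\|_{L^{\phi}(\Rn)} \|g\|_{L^{\phi^*}(\Rn)},
\]
and taking the supremum over $\|g\|_{\phi^*} \le 1$ completes the argument.
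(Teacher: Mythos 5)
Your overall architecture is exactly the paper's: reduce by density, invoke the Fefferman--Stein-type duality inequality $\int |fg|\,dx \lesssim \int M^{\sharp}f\,Mg\,dx$, then close with H\"older, boundedness of $M$ on $L^{\phi^{\ast}}(\Rn)$ (via \adec{} of $\phi$ giving \ainc{} of $\phi^{\ast}$), and the norm conjugate formula. That part is fine and matches the paper step for step.

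The genuine gap is in the one step you do not prove: you assert the key inequality for $f \in C^{\infty}_{0,0}(\Rn)$ and \emph{arbitrary locally integrable} $g$, i.e.\ you place the vanishing-integral hypothesis on the function paired with $M^{\sharp}$. The result actually available -- and the one the paper cites, \cite[Lemma 6.2.3]{DieHHMS09} -- places the hypotheses the other way around: it holds for $f \in C^{\infty}_0(\Rn)$ and $g \in C^{\infty}_{0,0}(\Rn)$, the mean-zero condition sitting on the function paired with $Mg$ (this is where the Calder\'on--Zygmund decomposition is performed, and it is not available for a general locally integrable $g$). Your sketch (''CZ decomposition of $g$ \dots the vanishing integral of $f$ eliminates the leading averages $f_Q$'') does not establish your swapped variant, and since the entire lemma rests on this inequality, the argument as written has a hole. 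The fix is to run the density reduction on the dual side: approximate $f \in L^{\phi}(\Rn)$ by plain $C^{\infty}_0$ functions and approximate the dual element $g \in L^{\phi^{\ast}}(\Rn)$ by functions in $C^{\infty}_{0,0}(\Rn)$ -- this is precisely why the preceding density lemma is needed for the conjugate space, using that $\phi^{\ast}$ satisfies \azero{}, \adec{} and \ainc{} -- then pass to the limit in the bilinear pairing $\langle f_j, g_j\rangle \lesssim \langle M^{\sharp}f_j, Mg_j\rangle$ using continuity of $M^{\sharp}$ on $L^{\phi}$ and of $M$ on $L^{\phi^{\ast}}$, exactly as you already argued for continuity of both sides.
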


\begin{proof}
By the pointwise inequality $M^{\sharp}f(x) \leq 2 Mf(x)$ and the assumptions on $\phi$, the sharp maximal operator is bounded in $L^{\phi}(\Rn)$. This implies that $M^{\sharp}f_j \to M^{\sharp}f$ in $L^{\phi}(\Rn)$ and $Mg_j \to Mg$ in $L^{\phi^{\ast}}(\Rn)$ for all convergent sequences $(f_j) \subset C^{\infty}_0(\Rn)$ and $(g_j)\subset C^{\infty}_{0,0}(\Rn)$. According to \cite[Lemma 6.2.3]{DieHHMS09}, for these functions we have the estimate
\begin{align*}
|\langle f_j, g_j\rangle| := \left | \int_{\Rn} f_j(x)g_j(x) \, dx \right | \lesssim \int_{\Rn} M^{\sharp}f_j(x) M g_j(x) \,dx = \langle M^{\sharp}f_j, Mg_j\rangle.
\end{align*}
Therefore for general $f \in L^{\phi}(\Rn)$ and $g\in L^{\phi^{\ast}}(\Rn)$ we have
\begin{align*}
|\langle f,g\rangle| &= \lim_{j \to \infty} |\langle f_j,g_j \rangle| \lesssim \lim_{j \to \infty} \langle M^{\sharp} f_j, Mg_j \rangle  = \langle M^{\sharp} f, Mg \rangle \lesssim \|M^{\sharp}f\|_{\phi} \|Mg\|_{\phi^{\ast}} \\
&\lesssim \|M^{\sharp}f\|_{\phi} \|g\|_{\phi^{\ast}}.
\end{align*}
Taking supremum over functions $g$ satisfying $\|g\|_{L^{\phi^{\ast}}(\Rn)} \leq 1$ the claim follows by the Norm conjugate formula, Lemma \ref{lem:norm-conjugate}.
\end{proof}

The next lemma has been proven by Shirai in the case of $[I_\alpha,b]$ but the proof works without changes also for the operator $I_{b, \alpha}$.
\begin{lem}[Lemma 4.2 in \cite{Shi06}]\label{lem:riesz-pointwise}
Suppose $0 < \alpha < n$ and $s \in (1,\infty)$. Then for every $b \in \BMO(\Rn)$ and $f \in C^{\infty}_0(\Rn)$ we have pointwise estimates
\begin{enumerate}
\item $M^{\sharp}([I_\alpha,b]f)(x) \lesssim \|b\|_{\ast} \left ( I_\alpha |f|(x) + I_{\alpha s}(|f|^{s})(x)^{1/s}\right );$
\item $M^{\sharp}(I_{b,\alpha}f)(x) \lesssim \|b\|_{\ast} \left ( I_\alpha |f|(x) + I_{\alpha s}(|f|^{s})(x)^{1/s}\right ).$
\end{enumerate}
\end{lem}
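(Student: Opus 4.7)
The plan is to run the Coifman--Rochberg--Weiss decomposition of the commutator, adapted to the Riesz potential. Since the arguments for (1) and (2) differ only in the placement of absolute values, I would first prove (1) and then record the modification for (2). Fix $x \in \Rn$ and any cube $Q \ni x$ with centre $x_Q$; split $f = f_1 + f_2$ with $f_1 := f\chi_{2Q}$ and use the algebraic identity
\[
[I_\alpha,b]f(y) = \bigl(b(y) - b_{2Q}\bigr)\, I_\alpha f(y) \;-\; I_\alpha\!\bigl((b-b_{2Q})f_1\bigr)(y) \;-\; I_\alpha\!\bigl((b-b_{2Q})f_2\bigr)(y).
\]
Choosing the subtraction constant $c_Q := -I_\alpha((b-b_{2Q})f_2)(x_Q)$ and labelling the three resulting terms $A$ (modulation), $B$ (local), and $C$ (tail), it suffices to bound $\fint_Q (|A(y)| + |B(y)| + |C(y) - C(x_Q)|)\,dy$ by the right-hand side of the claim.

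For the local piece $B$, I would apply Kolmogorov's inequality (via the weak $(1, n/(n-\alpha))$-bound for $I_\alpha$) to replace the $L^1$-average by $|Q|^{\alpha/n - 1}\int_{2Q}|b-b_{2Q}||f|\,dz$, then use Hölder with exponents $s,s'$ together with the John--Nirenberg estimate $\|b-b_{2Q}\|_{L^{s'}(2Q)} \lesssim \|b\|_{\ast} |Q|^{1/s'}$ to extract the factor $\|b\|_{\ast}$; what remains is $|Q|^{\alpha/n}\bigl(\fint_{2Q}|f|^s\bigr)^{1/s}$, which is dominated by $I_{\alpha s}(|f|^s)(x)^{1/s}$ since $|x-z|^{\alpha s - n} \gtrsim |Q|^{\alpha s/n - 1}$ for $z \in 2Q$. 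For the modulation piece $A = (b-b_{2Q})I_\alpha f$, I would split once more as $I_\alpha f = I_\alpha f_1 + I_\alpha f_2$: the far part obeys $|I_\alpha f_2(y)| \lesssim I_\alpha|f|(x)$ uniformly on $Q$ by direct kernel comparison, and John--Nirenberg then yields $\|b\|_{\ast} I_\alpha|f|(x)$; the near part is handled analogously to $B$, using instead the strong $L^{s_1} \to L^s$ continuity of $I_\alpha$ with $\tfrac{1}{s_1} - \tfrac{1}{s} = \tfrac{\alpha}{n}$. For the tail difference, I would employ the annular decomposition $(2Q)^c = \bigcup_{k\ge 1}(2^{k+1}Q \setminus 2^kQ)$, the kernel smoothness bound $\bigl|\,|y-z|^{\alpha-n} - |x_Q-z|^{\alpha-n}\bigr| \lesssim |Q|^{1/n}|x_Q - z|^{\alpha-n-1}$, and the BMO growth $\fint_{2^kQ}|b-b_{2Q}|\,dz \lesssim k\|b\|_{\ast}$; the resulting geometric series sums to $\lesssim \|b\|_{\ast} I_\alpha|f|(x)$.

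The hardest step is the local piece $B$: a direct Hölder split of the Riesz kernel $|y-z|^{\alpha-n}$ designed to pair one factor with $|b-b_{2Q}|^{s'}$ and the other with $|f|^s$ forces a non-integrable $|y-z|^{-n}$ on one side, so one must instead pass through the weak-type endpoint of $I_\alpha$ via Kolmogorov's inequality rather than a pointwise Hölder on the kernel. Once $A$, $B$, and $C$ are under control, taking the supremum over cubes $Q \ni x$ proves (1). For (2) the only change is to apply the triangle inequality $|b(y)-b(z)| \le |b(y)-b_{2Q}| + |b(z)-b_{2Q}|$ under the integral sign of $I_{b,\alpha}f(y)$, which decomposes it into two pieces structurally identical to $A$ and $B$ (with $b-b_{2Q}$ replaced by $|b-b_{2Q}|$, a change which affects none of the subsequent estimates), so with the corresponding choice of $c_Q$ the preceding bounds apply verbatim.
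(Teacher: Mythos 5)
The paper itself does not prove this lemma: it cites Shirai \cite{Shi06} verbatim and merely remarks that the argument transfers to $I_{b,\alpha}$. Your Coifman--Rochberg--Weiss decomposition is exactly the standard route behind that citation, and the skeleton is sound: the splitting into modulation, local and tail pieces, Kolmogorov via the weak $(1,\frac{n}{n-\alpha})$ endpoint for the local piece, and kernel smoothness on annuli for the tail are all correct, and you rightly identify Kolmogorov as the non-obvious step in $B$. Three steps, however, need repair before the argument closes.

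First, for the near part of $A$ you invoke the strong $L^{s_1}\to L^{s}$ bound for $I_\alpha$ with $\tfrac{1}{s_1}=\tfrac{1}{s}+\tfrac{\alpha}{n}$; this exponent satisfies $s_1>1$ only when $\tfrac{1}{s}+\tfrac{\alpha}{n}<1$, which the hypotheses $s\in(1,\infty)$, $0<\alpha<n$ do not guarantee (nor does the application in Theorem \ref{thm:riesz}, where $p$ may be close to $1$). The fix is the same device you use in $B$: H\"older with an auxiliary exponent $t\in(1,\min\{s,\tfrac{n}{n-\alpha}\})$ on the factor $b-b_{2Q}$, Kolmogorov for $\bigl(\fint_Q|I_\alpha f_1|^{t}\bigr)^{1/t}$, then Jensen to pass from $\fint_{2Q}|f|$ to $\bigl(\fint_{2Q}|f|^{s}\bigr)^{1/s}$. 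Second, in the tail $C$ the integrand on each annulus is the product $|b-b_{2Q}|\,|f|$, so the bound $\fint_{2^{k}Q}|b-b_{2Q}|\lesssim k\|b\|_{\ast}$ cannot be applied by itself; you must either use H\"older with John--Nirenberg in $L^{s'}$ on each annulus (which produces the $I_{\alpha s}(|f|^{s})^{1/s}$ term, not $I_\alpha|f|$), or split $|b(z)-b_{2Q}|\le|b(z)-b_{2^{k+1}Q}|+|b_{2^{k+1}Q}-b_{2Q}|$ and treat the two parts separately. Either way the tail is absorbed by the right-hand side of the lemma, just not by the single term you claim. Third, for part (2) the one-sided triangle inequality only yields an upper bound for $I_{b,\alpha}f(y)$ and therefore cannot control its mean oscillation; the correct substitute is the two-sided estimate $\bigl|\,|b(y)-b(z)|-|b_{2Q}-b(z)|\,\bigr|\le|b(y)-b_{2Q}|$, which writes $I_{b,\alpha}f(y)$ as $I_\alpha(|b-b_{2Q}|f)(y)$ plus an error dominated by $|b(y)-b_{2Q}|\,I_\alpha|f|(y)$, after which your $A$, $B$, $C$ scheme applies verbatim. (Note also that the comparison $|Q|^{\alpha/n}\bigl(\fint_{2Q}|f|^{s}\bigr)^{1/s}\lesssim I_{\alpha s}(|f|^{s})(x)^{1/s}$ presupposes $\alpha s<n$, a restriction implicit in the statement and satisfied where the lemma is used in Theorem \ref{thm:riesz}.)
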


The following Lemma is the main result of \cite{HarH_Riesz} taking account Remark \ref{rem:Riesz-space}.
\begin{lem}\label{lem:riesz-bdd}
If $\phi$ satisfies Assumptions \ref{assu:fractional}, then the Riesz potential $I_\alpha: L^{\phi}(\Rn) \to L^{\psi}(\Rn)$ is bounded.
\end{lem}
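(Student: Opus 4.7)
The plan is to reduce the statement to the main theorem of Harjulehto--H\"ast\"o \cite{HarH_Riesz} by exploiting the regularization machinery that has already been established. Since the hypotheses of the cited paper are phrased in terms of a strongly regularized $\Phi$-function satisfying \azero{}, \aone{}, \atwo{} with \aone{} valid on all scales (not just $|Q|\le 1$), and with the target space built abstractly as $L^{\phi_\alpha^{\sharp}}(\Rn)$, the task is to verify that our Assumptions \ref{assu:fractional} place us squarely within their framework after regularization, and that the two a priori different target spaces coincide up to equivalent norms.

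First, I would invoke Lemma \ref{lem:reg-psi} to produce regularized partners $\tilde\phi,\tilde\psi\in\Phi_s(\Rn)$ with $\phi\sim\tilde\phi$, $\psi\sim\tilde\psi$, and $\tilde\phi^{-1}(x,t)\approx t^{\alpha/n}\tilde\psi^{-1}(x,t)$, together with the stronger form of \aone{} for $\tilde\psi$ that is valid for every cube $Q$ and every $t\in[0,|Q|^{-1}]$. The preceding paragraphs of the section establish that $\tilde\phi$ still satisfies \azero{}, \aone{}, \atwo{}, \ainc{p} and \adec{1/r}, so $\tilde\phi$ meets every structural hypothesis imposed in \cite{HarH_Riesz}.

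Next I would apply the main Riesz-potential theorem of \cite{HarH_Riesz} to the regularized function $\tilde\phi$, obtaining
\begin{equation*}
\|I_\alpha f\|_{L^{\tilde\phi_\alpha^{\sharp}}(\Rn)} \lesssim \|f\|_{L^{\tilde\phi}(\Rn)}
\end{equation*}
for every $f\in L^{\tilde\phi}(\Rn)$. By Remark \ref{rem:Riesz-space}, the relation $\tilde\phi^{-1}(x,t)\approx t^{\alpha/n}\tilde\psi^{-1}(x,t)$ combined with Lemma \ref{lem:composition}(a) yields $(\tilde\phi_\alpha^{\sharp})^{-1}\approx\tilde\psi^{-1}$, so that $\tilde\phi_\alpha^{\sharp}\simeq\tilde\psi$ and hence $L^{\tilde\phi_\alpha^{\sharp}}(\Rn)=L^{\tilde\psi}(\Rn)$ with comparable norms.

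Finally, the weak equivalences $\phi\sim\tilde\phi$ and $\psi\sim\tilde\psi$ give $L^{\phi}(\Rn)=L^{\tilde\phi}(\Rn)$ and $L^{\psi}(\Rn)=L^{\tilde\psi}(\Rn)$ with equivalent norms, so chaining the estimates produces $\|I_\alpha f\|_{L^{\psi}(\Rn)}\lesssim\|f\|_{L^{\phi}(\Rn)}$. The only nontrivial step is the identification of the two target spaces, which is the content of Remark \ref{rem:Riesz-space}; everything else is a routine reduction via regularization. The value of stating the lemma in this form is purely bookkeeping, so that subsequent commutator arguments can quote $I_\alpha:L^{\phi}(\Rn)\to L^{\psi}(\Rn)$ directly in the simpler Assumption \ref{assu:fractional} language without invoking the auxiliary space $L^{\phi_\alpha^{\sharp}}(\Rn)$.
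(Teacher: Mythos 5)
Your proposal is correct and matches the paper's approach: the paper treats this lemma as an immediate consequence of the main theorem of \cite{HarH_Riesz} combined with the identification of target spaces in Remark \ref{rem:Riesz-space}, exactly as you describe. The regularization bookkeeping you spell out is the content of the discussion preceding Lemma \ref{lem:reg-psi}, so no new argument is needed.
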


The following Theorem shows boundedness of commutators related to Riesz potentials.

\begin{thm}\label{thm:riesz}
Let $1< p \leq q < \infty$, $\frac{\alpha}{n} = \frac{1}{p} - \frac{1}{q}$ and $\phi, \psi \in \Phi_w(\Rn)$ be such that
\begin{equation*}
\phi^{-1}(x,t) \approx t^{\frac{\alpha}{n}}\psi^{-1}(x,t).
\end{equation*}
Let $r \in  (\frac{\alpha}{n}, \frac{1}{p}]$. Assume that $\phi$ satisfies \azero{}, \aone{}, \atwo{}, \ainc{p} and \adec{1/r}. Then we have
\begin{enumerate}
\item $\|[I_\alpha, b]f\|_{L^{\psi}(\Rn)} \lesssim \|b\|_{\ast} \|f\|_{L^{\phi}(\Rn)}$
\item $\|I_{b,\alpha}f\|_{L^{\psi}(\Rn)} \lesssim \|b\|_{\ast} \|f\|_{L^{\phi}(\Rn)}$
\end{enumerate}
for every $f \in L^{\phi}(\Rn)$ and $b \in \BMO(\Rn)$.
\end{thm}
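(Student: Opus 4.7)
The plan is to combine Lemma \ref{lem:sharp-norm}, Lemma \ref{lem:riesz-pointwise}, and Lemma \ref{lem:riesz-bdd}. I treat only $[I_\alpha, b]$, as the argument for $I_{b,\alpha}$ is identical by virtue of Lemma \ref{lem:riesz-pointwise}(2). By density of $C^\infty_0(\Rn)$ in $L^\phi(\Rn)$ it suffices to establish the inequality on $f \in C^\infty_0(\Rn)$ and extend by continuity using the a priori bound obtained on the dense subspace.

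First I would verify that $\psi$ inherits \azero{}, \aone{}, \atwo{}, \ainc{} and \adec{} from $\phi$. The identity $\phi^{-1}(x,t) \approx t^{\alpha/n}\psi^{-1}(x,t)$ combined with Remark \ref{rem:assumptions}(c) translates \ainc{p} and \adec{1/r} of $\phi$ into \adec{1/(r-\alpha/n)} and \ainc{q} of $\psi$; these remain admissible because $r > \alpha/n$ and $q > 1$. The structural conditions \azero{}, \aone{} and \atwo{} transfer in the same way through the generalized inverses. Hence Lemma \ref{lem:sharp-norm} gives $\|[I_\alpha, b]f\|_{L^\psi(\Rn)} \lesssim \|M^\sharp([I_\alpha, b]f)\|_{L^\psi(\Rn)}$.

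Next, fix any $s \in (1, p)$, which is possible since $p > 1$. Lemma \ref{lem:riesz-pointwise}(1) yields
\[
M^\sharp([I_\alpha, b]f)(x) \lesssim \|b\|_\ast \bigl(I_\alpha |f|(x) + I_{\alpha s}(|f|^s)(x)^{1/s}\bigr).
\]
The first term is bounded directly by Lemma \ref{lem:riesz-bdd}: $\|I_\alpha |f|\|_{L^\psi(\Rn)} \lesssim \|f\|_{L^\phi(\Rn)}$. For the second, I introduce the rescaled $\Phi$-functions $\phi_s(x,t) := \phi(x, t^{1/s})$ and $\psi_s(x,t) := \psi(x, t^{1/s})$, whose generalized inverses satisfy $\phi_s^{-1} = (\phi^{-1})^s$ and $\psi_s^{-1} = (\psi^{-1})^s$. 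A routine verification (using $1 < s < p$) shows that $(\phi_s, \psi_s)$ satisfies Assumptions \ref{assu:fractional} with parameter $\alpha s$ in place of $\alpha$ and exponents $p/s, q/s, rs$, and $\phi_s^{-1}(x,t) \approx t^{\alpha s/n}\psi_s^{-1}(x,t)$ follows by raising the analogous identity for $(\phi,\psi)$ to the $s$-th power. Combining Lemma \ref{lem:riesz-bdd} applied to this rescaled pair with the homogeneity identity $\|g\|_{L^{\phi_s}(\Rn)} = \|g^{1/s}\|_{L^\phi(\Rn)}^s$ (immediate from the definition of the Luxemburg norm) yields
\[
\|I_{\alpha s}(|f|^s)^{1/s}\|_{L^\psi(\Rn)} = \|I_{\alpha s}(|f|^s)\|_{L^{\psi_s}(\Rn)}^{1/s} \lesssim \||f|^s\|_{L^{\phi_s}(\Rn)}^{1/s} = \|f\|_{L^\phi(\Rn)}.
\]
Assembling these bounds produces the claim on $C^\infty_0(\Rn)$, and the density extension finishes the proof.

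The main obstacle is the stability of the structural conditions \azero{}, \aone{}, \atwo{}, \ainc{} and \adec{} under the scaling $\phi \mapsto \phi(\cdot, \cdot^{1/s})$, together with their transfer from $\phi$ to $\psi$ via the generalized inverse. Both verifications are essentially bookkeeping, but one must keep careful track of the admissible range of $t$ in \aone{} and of the exponents in \ainc{} and \adec{}; the choice $s \in (1, p)$ is precisely what is needed to keep all the parameters inside their admissible ranges.
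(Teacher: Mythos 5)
Your proposal is correct and follows essentially the same route as the paper: reduce to $C^\infty_0(\Rn)$ by density, apply Lemma \ref{lem:sharp-norm} and the pointwise estimate of Lemma \ref{lem:riesz-pointwise}, bound $I_\alpha$ by Lemma \ref{lem:riesz-bdd}, and handle $I_{\alpha s}(|f|^s)^{1/s}$ via the rescaled pair $(\phi_s,\psi_s)$ with $s\in(1,p)$ together with the norm identity $\||f|^s\|_{\phi_s}^{1/s}=\|f\|_\phi$. The parameter bookkeeping you flag ($p/s>1$, $rs\in(\alpha s/n, s/p]$) is exactly what the paper verifies, citing \cite[Proposition 5.2.2]{HarH_book} for the stability of the structural conditions under $t\mapsto t^{1/s}$.
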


\begin{proof}
We give the proof for $(1)$ as the proof of $(2)$ is identical. Let us denote $\phi_s = \phi\left (x,t^{1/s}\right )$ for any weak $\Phi$-function. Recall that $\psi$ satisfies similar assumptions to $\phi$ but with different parameters in \ainc{} and \adec{}. Therefore, choosing any $s\in (1, p)$ and applying first Lemma \ref{lem:sharp-norm} and then Lemma \ref{lem:riesz-pointwise}, we see that for any $f \in C^{\infty}_0(\Rn)$ and $b \in \BMO(\Rn)$ 
\begin{align}\label{eq:Riesz-1}
\begin{split}
\|[I_\alpha,b]f\|_{\psi} & \lesssim \|M^{\sharp}([I_\alpha,b]f)\|_{\psi} \lesssim \|b\|_{\ast} \|I_\alpha|f| + I_{\alpha s}(|f|^s)^{1/s}\|_{\psi} \\
&\leq  \|b\|_{\ast}\left ( \|I_\alpha|f|\|_{\psi} + \|I_{\alpha s}(|f|^s)\|^{1/s}_{\psi_{s}} \right).
\end{split}
\end{align}
By Lemma \ref{lem:riesz-bdd} $I_\alpha: L^{\phi}(\Rn) \to L^{\psi}(\Rn)$ is bounded. Let us show boundedness of $I_{\alpha s}: L^{\phi_s}(\Rn) \to L^{\psi_s}(\Rn)$. 
It is easy to check that $\psi_s^{-1}(x,t) = \psi^{-1}(x,t)^{s}$. Next, it immediately follows that
\begin{align*}
\psi_{s}^{-1}(x,t) \approx \psi^{-1}(x,t)^{s} \approx t^{-\frac{\alpha s}{n}} \phi^{-1}(x,t)^{s} \approx t^{-\frac{\alpha s}{n}} \phi_{s}^{-1}(x,t).
\end{align*}
In \cite[Proposition 5.2.2]{HarH_book} it is shown that $\phi_s$ satisfies \azero{}, \aone{}, \atwo{}, \ainc{p/s} and \adec{1/rs}. Since $\frac{p}{s}>1$ and $rs \in (\frac{\alpha s}{n}, \frac{s}{p}]$, Lemma \ref{lem:riesz-bdd} shows that $I_{\alpha s}$ is bounded from $L^{\phi_s}(\Rn)$ to $L^{\psi_s}(\Rn)$ and likewise
\begin{align*}
\|I_{\alpha s}(|f|^{s})\|_{\psi_t}^{\frac{1}{s}} \lesssim \||f|^s\|_{\phi_s}^{\frac{1}{s}}.
\end{align*}
Coming back to \eqref{eq:Riesz-1} and definition of $\phi$-norm, we have
\begin{align*}
\|[I_\alpha,b]f\|_{\psi} \lesssim \|b\|_\ast \left (\|f\|_{\phi} +\|f^s\|^{1/s}_{\phi_s}\right ) \lesssim \|b\|_{_\ast} \|f\|_{\phi}.
\end{align*}

Since $\phi$ satisfies \azero{} and \adec{}, we know that $C^{\infty}_0(\Rn)$ is dense in $L^{\phi}(\Rn)$. Now we get the result for all $f \in L^{\phi}(\Rn)$ since the commutator is linear and bounded in a dense set.
\end{proof}

\section{Maximal commutators}
In this section we show boundedness of both maximal commutators $M_b: {L^\phi}(\Rn) \to L^{\phi}(\Rn)$ and $M_{b,\alpha}:L^{\phi}(\Rn) \to L^{\psi}(\Rn)$. We start with the simpler non-fractional version but first we collect some lemmas. The first Lemma was essentially proven in \cite[Corollaries 1.11, 1.12]{AgcGKM15}. Authors' main results lead to pointwise estimates
\begin{align*}
M_b f(x) \lesssim \|b\|_{\ast}M^2f(x) \quad \text{and} \quad |[M,b]f(x)| \lesssim (\|b^{+}\|_{\ast} + \|b^{-}\|_{\infty}) M^2f(x)
\end{align*}
where $b \in \BMO(\Rn)$, so it holds for any (quasi) Banach space where $M$ is bounded, especially for $L^{\phi}(\Rn)$. Similar approach cannot directly be used for $M_{b,\alpha}$ since this operator is not bounded from $L^{\phi}(\Rn)$ to itself.

\begin{lem}\label{lem:maximal-commutator}
If $b \in \BMO(\Rn)$ and $\phi \in \Phi_w(\Rn)$ satisfies \azero{}, \aone{}, \atwo{} and \ainc{}, then 
\begin{align*}
\|M_bf\|_\phi \lesssim \|b\|_{\ast} \|f\|_\phi.
\end{align*}
If additionally $b^{-}\in L^{\infty}(\Rn)$, then also the commutator $[M,b]: L^{\phi}(\Rn) \to L^{\phi}(\Rn)$ is bounded.
\end{lem}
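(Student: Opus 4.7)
The plan is to reduce everything to the pointwise estimates cited immediately before the lemma statement and then to invoke iterated boundedness of the Hardy--Littlewood maximal operator. The assumptions \azero{}, \aone{}, \atwo{} and \ainc{} are precisely those that yield \eqref{eq:M-phi-bdd}, namely $\|Mf\|_\phi \le C_M \|f\|_\phi$ for every $f \in L^\phi(\Rn)$. Applying this twice gives $\|M^2 f\|_\phi \le C_M^2 \|f\|_\phi$, and this is the only analytic ingredient needed on the function-space side.

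For the first claim, I would start from the pointwise bound
\[
M_b f(x) \lesssim \|b\|_\ast\, M^2 f(x)
\]
of Agcayazi--Gogatishvili--Koca--Mustafayev, take $L^\phi$-norms of both sides, and apply the iterated maximal bound:
\[
\|M_b f\|_\phi \lesssim \|b\|_\ast \, \|M^2 f\|_\phi \lesssim \|b\|_\ast \, \|f\|_\phi.
\]
Monotonicity of the norm with respect to pointwise absolute value makes this step immediate, since the implicit constant in the pointwise inequality depends only on the dimension.

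For the second claim I would use the companion pointwise estimate
\[
\bigl|[M,b]f(x)\bigr| \lesssim \bigl(\|b^+\|_\ast + \|b^-\|_\infty\bigr)\, M^2 f(x).
\]
Since $b^+ = b + b^-$ and $L^\infty(\Rn) \hookrightarrow \BMO(\Rn)$ with $\|b^-\|_\ast \lesssim \|b^-\|_\infty$, the quantity $\|b^+\|_\ast + \|b^-\|_\infty$ is controlled by $\|b\|_\ast + \|b^-\|_\infty$, which is finite by assumption. Taking $L^\phi$-norms and applying $\|M^2 f\|_\phi \lesssim \|f\|_\phi$ as above yields boundedness of $[M,b] : L^\phi(\Rn) \to L^\phi(\Rn)$.

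There is no real obstacle in this proof: all the nontrivial work is carried out by the pointwise estimates of \cite{AgcGKM15}, and the generalized Orlicz setting enters only through the already-established $L^\phi$-boundedness of $M$. The only tiny point to verify is that the structural hypotheses \azero{}, \aone{}, \atwo{} and \ainc{} do ensure \eqref{eq:M-phi-bdd}, which is exactly the content of H\"ast\"o's result recalled at the end of Section~2.
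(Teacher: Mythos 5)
Your proposal is correct and follows exactly the route the paper takes: the paper does not give a separate proof of this lemma but derives it from the same pointwise estimates of \cite{AgcGKM15} combined with the $L^\phi$-boundedness of $M$ under \azero{}, \aone{}, \atwo{} and \ainc{}. Your additional observation that $\|b^+\|_\ast \lesssim \|b\|_\ast + \|b^-\|_\infty$ correctly fills in the small bookkeeping step the paper leaves implicit.
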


It is straightforward to prove that $b \in \BMO(\Rn)$ is also a necessary condition for boundedness of the maximal commutator and we get the following characterization of $\BMO(\Rn)$.

\begin{prop}
Let $b \in L^{1}_\loc(\Rn)$ and $\phi \in \Phi_w(\Rn)$ satisfy \azero{}, \aone{}, \atwo{} and \ainc{}. Then $M_b: L^{\phi}(\Rn) \to L^{\phi}(\Rn)$ is bounded if and only if $b\in \BMO(\Rn)$.
\end{prop}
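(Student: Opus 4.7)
The sufficiency ($\Leftarrow$) is exactly Lemma \ref{lem:maximal-commutator}, so the plan is to establish the necessity ($\Rightarrow$) by the classical testing-on-characteristic-functions argument, now routed through the generalized Orlicz machinery developed in Section 2.

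Assume $M_b$ is bounded on $L^{\phi}(\Rn)$ with operator norm $C$. Fix an arbitrary cube $Q \subset \Rn$ and test against $f = \chi_Q$. For every $x \in Q$ the cube $Q$ itself is admissible in the supremum defining $M_b(\chi_Q)(x)$, so
\begin{align*}
M_b(\chi_Q)(x) \ge \frac{1}{|Q|}\int_Q |b(x)-b(y)|\,dy \ge \bigl|b(x)-b_Q\bigr|,
\end{align*}
by Jensen's inequality applied inside the absolute value. This gives the pointwise bound $|b(x)-b_Q|\chi_Q(x) \le M_b(\chi_Q)(x)$, hence after taking $L^{\phi}$-norms and invoking the assumed boundedness of $M_b$,
\begin{align*}
\bigl\|(b-b_Q)\chi_Q\bigr\|_{L^{\phi}(\Rn)} \le \|M_b(\chi_Q)\|_{L^{\phi}(\Rn)} \le C\,\|\chi_Q\|_{L^{\phi}(\Rn)}.
\end{align*}

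Next I would unbundle the left-hand side by the generalized Hölder inequality of Lemma \ref{lem:holder} applied to the pair $(\phi,\phi^{\ast})$:
\begin{align*}
\int_Q |b(x)-b_Q|\,dx = \int_{\Rn} \bigl|(b(x)-b_Q)\chi_Q(x)\bigr|\,\chi_Q(x)\,dx \le 2\,\|(b-b_Q)\chi_Q\|_{L^{\phi}(\Rn)}\,\|\chi_Q\|_{L^{\phi^{\ast}}(\Rn)}.
\end{align*}
Combining with the previous display yields
\begin{align*}
\int_Q |b(x)-b_Q|\,dx \le 2C\,\|\chi_Q\|_{L^{\phi}(\Rn)}\,\|\chi_Q\|_{L^{\phi^{\ast}}(\Rn)}.
\end{align*}

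Finally, the hypotheses \azero{}, \aone{}, \atwo{} on $\phi$ (which by Remark \ref{rem:assumptions}(b) are inherited by $\phi^{\ast}$) allow me to invoke Lemma \ref{lem:norm-of-ball}, giving $\|\chi_Q\|_{L^{\phi}(\Rn)}\,\|\chi_Q\|_{L^{\phi^{\ast}}(\Rn)} \approx |Q|$ with implicit constant independent of $Q$. Therefore
\begin{align*}
\frac{1}{|Q|}\int_Q |b(x)-b_Q|\,dx \lesssim C,
\end{align*}
uniformly in $Q$, so $b \in \BMO(\Rn)$ with $\|b\|_{\ast} \lesssim C$.

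There is essentially no conceptual obstacle: the proof reduces to (i) the self-improvement of the testing-by-$\chi_Q$ idea of Coifman--Rochberg--Weiss and (ii) the structural Lemma \ref{lem:norm-of-ball}, which is where the non-trivial assumptions \azero{}, \aone{}, \atwo{} are used. The only point that requires a little care is the appeal to Remark \ref{rem:assumptions}(b) so that Lemma \ref{lem:norm-of-ball} may be applied to the conjugate function $\phi^{\ast}$; assumption \ainc{} is what ensures $\phi^{\ast} \in \Phi_w(\Rn)$ is a proper $\Phi$-function so that its associate space is accessible.
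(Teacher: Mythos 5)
Your argument is correct and is essentially the paper's own proof: both directions test $M_b$ on $\chi_Q$, use the pointwise bound $|b(x)-b_Q|\chi_Q(x)\le M_b\chi_Q(x)$, apply H\"older's inequality for the pair $(\phi,\phi^{\ast})$ (Lemma \ref{lem:holder}), the assumed boundedness, and Lemma \ref{lem:norm-of-ball} to get $\|\chi_Q\|_{\phi}\|\chi_Q\|_{\phi^{\ast}}\approx|Q|$. The only differences are cosmetic: you take the $L^{\phi}$-norm of $(b-b_Q)\chi_Q$ before applying H\"older while the paper applies H\"older directly to $\int_Q M_b\chi_Q\,dx$, and your closing remark about needing \ainc{} for $\phi^{\ast}$ is unnecessary since Lemma \ref{lem:norm-of-ball} applied to $\phi$ already yields the product estimate.
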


\begin{proof}
If $b \in \BMO(\Rn)$, the claim follows directly from Lemma \ref{lem:maximal-commutator}.

Let us then assume that $M_b: L^{\phi}(\Rn) \to L^{\phi}(\Rn)$ is bounded. Take any cube $Q\subset \Rn$. Using H\"older's inequality (Lemma \ref{lem:holder}), the assumption and Lemma \ref{lem:norm-of-ball} in this order, we see that
\begin{align*}
\dfrac{1}{|Q|} \int_{Q}|b(x)-b_Q| \, dx &\leq \dfrac{1}{|Q|} \int_{Q} \left ( \int_{Q} |b(x)-b(y)|\chi_{Q}(y) \, dy \right ) dx \\
&\leq \dfrac{1}{|Q|} \int_{\Rn} M_b\chi_Q(x) \, dx \lesssim \dfrac{1}{|Q|} \|M_b\chi_Q\|_{\phi} \|\chi_Q\|_{\phi^{\ast}} \\
&\leq \dfrac{1}{|Q|} \|\chi_{Q}\|_{\phi} \|\chi_{Q}\|_{\phi^\ast} \leq C.
\end{align*}
Taking the supremum over all cubes $Q$, we see that $b \in \BMO(\Rn)$.
\end{proof}

As the fractional commutators are closely related, we get the boundedness of maximal fractional commutator also in a similar fashion.

\begin{thm}\label{thm:M_alpha_b-bounded}
Let $b \in L^1_\loc(\Rn)$, $1 < p \leq q < \infty$, $\frac{\alpha}{n} = \frac{1}{p} - \frac{1}{q}$ and $\phi, \psi \in \Phi_w(\Rn)$ be such that
\begin{equation*}
\phi^{-1}(x,t) \approx t^{\frac{\alpha}{n}}\psi^{-1}(x,t).
\end{equation*}
Let $r \in  (\frac{\alpha}{n}, \frac{1}{p}]$. Assume that $\phi$ satisfies \azero{}, \aone{}, \atwo{}, \ainc{p} and \adec{1/r}. Then $M_{b,\alpha}: L^{\phi}(\Rn) \to L^{\psi}(\Rn)$ is bounded and
\begin{align*}
\|M_{b,\alpha} f\|_{\psi} \lesssim \|b\|_{\ast}\|f\|_{\phi}
\end{align*}
if and only if $b \in \BMO(\Rn)$.
\end{thm}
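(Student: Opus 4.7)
The plan is to split the claim into sufficiency ($b \in \BMO \Rightarrow$ boundedness with the stated estimate) and necessity (boundedness $\Rightarrow b \in \BMO$), handling sufficiency by a pointwise reduction to the already-established Theorem \ref{thm:riesz}\,(2) for $I_{b,\alpha}$, and necessity by testing against characteristic functions of cubes together with a mixed-norm adaptation of Lemma \ref{lem:norm-of-ball}.

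For the sufficiency direction I would first record the pointwise domination
\begin{equation*}
M_{b,\alpha}f(x) \lesssim I_{b,\alpha}|f|(x).
\end{equation*}
This is immediate: whenever $x,y$ lie in a common cube $Q$, $|x-y|^{n-\alpha} \lesssim |Q|^{1-\alpha/n}$, so $\frac{1}{|Q|^{1-\alpha/n}} \lesssim \frac{1}{|x-y|^{n-\alpha}}$ on $Q\times Q$, and hence the average defining $M_{b,\alpha}f(x)$ is dominated by a piece of the Riesz integral $I_{b,\alpha}|f|(x)$ uniformly in $Q$. Taking the supremum over $Q \ni x$ and applying Theorem \ref{thm:riesz}\,(2) then yields
\begin{equation*}
\|M_{b,\alpha}f\|_{L^\psi(\Rn)} \lesssim \|I_{b,\alpha}|f|\|_{L^\psi(\Rn)} \lesssim \|b\|_\ast\|f\|_{L^\phi(\Rn)}.
\end{equation*}

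For the necessity direction I would fix a cube $Q$ and test $M_{b,\alpha}$ on $\chi_Q$. For $x\in Q$,
\begin{equation*}
M_{b,\alpha}\chi_Q(x) \geq \frac{1}{|Q|^{1-\alpha/n}}\int_Q|b(x)-b(y)|\,dy \geq |Q|^{\alpha/n}|b(x)-b_Q|,
\end{equation*}
using $\int_Q |b(x)-b(y)|\,dy \geq \left|\int_Q(b(x)-b(y))\,dy\right|$. Integrating over $Q$, applying Lemma \ref{lem:holder} with the pair $(\psi,\psi^\ast)$, and using the assumed boundedness $\|M_{b,\alpha}\chi_Q\|_\psi\lesssim\|\chi_Q\|_\phi$, I obtain
\begin{equation*}
\int_Q|b(x)-b_Q|\,dx \lesssim |Q|^{-\alpha/n}\,\|\chi_Q\|_{L^\phi(\Rn)}\,\|\chi_Q\|_{L^{\psi^\ast}(\Rn)}.
\end{equation*}

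The main obstacle is the mixed-norm identity $\|\chi_Q\|_{L^\phi(\Rn)}\,\|\chi_Q\|_{L^{\psi^\ast}(\Rn)} \approx |Q|^{1+\alpha/n}$, as Lemma \ref{lem:norm-of-ball} only handles the self-conjugate pair $(\phi,\phi^\ast)$. I would establish it by first noting that the scaling relation $\phi^{-1}(x,t) \approx t^{\alpha/n}\psi^{-1}(x,t)$ transfers \azero{}, \aone{} and \atwo{} from $\phi$ to $\psi$ (the $t^{\alpha/n}$-factor being independent of $x$), then applying Lemma \ref{lem:measure-scaling} separately to $\|\chi_Q\|_\phi$ and $\|\chi_Q\|_{\psi^\ast}$, invoking Lemma \ref{lem:conjugate-inverse}\,(b) to rewrite $(\psi^\ast)^{-1}$ in terms of $\psi^{-1}$, and using \aone{}/\atwo{} to treat the factors $\phi^{-1}(\cdot,|Q|^{-1})$ and $\psi^{-1}(\cdot,|Q|^{-1})$ as essentially constant on $Q$. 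The scaling relation then contributes the additional $|Q|^{\alpha/n}$ factor. Substituting back yields $\fint_Q|b-b_Q|\,dx\lesssim 1$ uniformly in $Q$, hence $b\in\BMO(\Rn)$.
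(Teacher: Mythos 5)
Your proposal is correct and follows essentially the same route as the paper: sufficiency via the pointwise domination $M_{b,\alpha}f \lesssim I_{b,\alpha}|f|$ and Theorem \ref{thm:riesz}\,(2), and necessity by testing on $\chi_Q$, applying H\"older with $(\psi,\psi^\ast)$, and comparing norms of characteristic functions. The ``mixed-norm identity'' you flag as the main obstacle is handled in the paper exactly as you suggest, by first showing $\|\chi_Q\|_{\phi}\approx|Q|^{\alpha/n}\|\chi_Q\|_{\psi}$ via Lemma \ref{lem:measure-scaling} and the inverse relation, and then invoking Lemma \ref{lem:norm-of-ball} for the pair $(\psi,\psi^\ast)$ --- which is slightly more direct than your route through $(\psi^\ast)^{-1}$ and Lemma \ref{lem:conjugate-inverse}\,(b), but equivalent.
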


\begin{proof}
Let us first assume that $b \in \BMO(\Rn)$. Since we have the pointwise estimate
\begin{align*}
M_{b,\alpha}f \lesssim I_{b,\alpha}(|f|),
\end{align*}
Theorem \ref{thm:riesz} (2) immediately yields $\|M_{b,\alpha} f\|_{\psi} \lesssim \|I_{b,\alpha}(|f|)\|_{\psi} \lesssim  \|b\|_\ast \|f\|_{\phi}$.

Let us then assume that $M_{b, \alpha}$ is bounded and $Q$ is an arbitrary cube. This and H\"older's inequality give
\begin{align*}
\dfrac{1}{|Q|} \int_{Q}|b(x) - b_Q| \, dx &\leq \dfrac{1}{|Q|^{1+\frac{\alpha}{n}}} \int_{Q}\left ( \dfrac{1}{|Q|^{1-\frac{\alpha}{n}}} \int_{Q} |b(x)-b(y)| \chi_Q(y) \, dy\right ) \, dx \\
&\leq \dfrac{1}{|Q|^{1+\frac{\alpha}{n}}} \int_{Q} M_{b,\alpha}\chi_Q(x) \, dx \\
&\leq \dfrac{1}{|Q|^{1+\frac{\alpha}{n}}} \|M_{b,\alpha}\chi_Q\|_{\psi} \|\chi_Q\|_{\psi^{\ast}} \\
&\lesssim \dfrac{1}{|Q|^{1+\frac{\alpha}{n}}} \|\chi_Q\|_{\phi} \|\chi_Q\|_{\psi^\ast}.
\end{align*}
Now we need to estimate $\|\chi_Q\|_{\phi}$ in terms of $\|\chi_Q\|_{\psi}$. This is done with the help of Lemma \ref{lem:measure-scaling}:
\begin{equation}\label{eq:chi-phi-psi}
\|\chi_Q\|_{\phi} \approx \dfrac{1}{\fint_{Q} \phi^{-1}\left (x, \frac{1}{|Q|}\right ) \, dx} \approx \dfrac{1}{\frac{1}{|Q|^{\alpha/n}}\fint_{Q} \psi^{-1}\left (x, \frac{1}{|Q|}\right ) \, dx} \approx |Q|^{\alpha/n}\|\chi_Q\|_{\psi}.
\end{equation}
Therefore Lemma \ref{lem:norm-of-ball} and the previous two displays yield
\begin{align*}
\dfrac{1}{|Q|} \int_{Q} |b(x)-b_Q| \, dx \lesssim \frac{1}{|Q|} \|\chi_Q\|_{\psi} \|\chi_Q\|_{\psi^\ast} < C.
\end{align*}
Taking supremum over all cubes $Q$ we see that $b \in \BMO(\Rn)$.
\end{proof}

\section{Commutators of fractional maximal operators}

In this section we combine previous estimates of maximal operators to show boundedness of commutators of fractional maximal operators generalizing the ideas of Zhang, Si and Wu in \cite{varexp1,varexp2}. 

In \cite{BasMR01} the authors showed the following characterization for functions with bounded mean oscillation and negative part. The proof of $b \in \BMO(\Rn)$ is similar to the proof of Lemma \ref{lem:norm-thing}. However, the $L^{1}$ nature of assumption with non-fractional maximal function is needed for $b^{-}\in L^{\infty}(\Rn)$ and it is not known if assumption like the norm estimate in Lemma \ref{lem:norm-thing} with fractional maximal function implies this.
\begin{lem}\label{lem:L1-version}
Suppose $b \in L^1_\loc(\Rn)$ and 
\begin{align*}
\sup_{Q} \dfrac{1}{|Q|} \int_Q |b(x)-M_Qb(x)| \, dx < \infty.
\end{align*}
Then $b \in \BMO(\Rn)$ and $b^{-}\in L^{\infty}(\Rn)$.
\end{lem}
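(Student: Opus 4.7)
The plan hinges on a single pointwise identity: since $b \in L^1_{\loc}(\Rn)$, the Lebesgue differentiation theorem gives $M_Q b(x) \ge |b(x)|$ for almost every $x \in Q$, so in particular $M_Q b(x) \ge b(x)$ a.e., and hence
\begin{align*}
|b(x) - M_Q b(x)| = M_Q b(x) - b(x) \quad \text{a.e.\ } x \in Q.
\end{align*}
Call $C$ the supremum appearing in the hypothesis. The whole argument consists in exploiting this identity together with the trivial bound $M_Q b(x) \ge |b|_Q \ge b_Q$, valid for every $x \in Q$ (take $Q$ itself as the admissible cube in the sup defining $M_Q b$).

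For the $\BMO$ part, I would split via the triangle inequality $|b(x) - b_Q| \le |b(x) - M_Q b(x)| + (M_Q b(x) - b_Q)$, where the second term is nonnegative by the observation above. Integrating the identity gives $\fint_Q M_Q b \,dx = b_Q + \fint_Q |b - M_Q b|\,dx \le b_Q + C$, so
\begin{align*}
\fint_Q (M_Q b(x) - b_Q)\,dx = \fint_Q M_Q b(x)\,dx - b_Q \le C,
\end{align*}
and combining with the hypothesis yields $\fint_Q |b - b_Q|\,dx \le 2C$. Taking the supremum over cubes $Q$ proves $b \in \BMO(\Rn)$ with $\|b\|_\ast \le 2C$.

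For the $L^\infty$ bound on $b^-$, I would fix a Lebesgue point $x_0$ for both $b$ and $|b|$ (a.e.\ point works) and consider cubes $Q \ni x_0$ shrinking to $x_0$. For any such $Q$ the pointwise bound $M_Q b(x) \ge |b|_Q$ for $x \in Q$ combined with the identity gives
\begin{align*}
|b|_Q - b_Q = \fint_Q (|b|_Q - b(x))\,dx \le \fint_Q (M_Q b(x) - b(x))\,dx \le C.
\end{align*}
Letting $|Q| \to 0$, Lebesgue differentiation yields $|b(x_0)| - b(x_0) = 2b^-(x_0) \le C$, so $\|b^-\|_\infty \le C/2$.

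There is not really a hard step here; the main thing to be careful about is the direction of the Lebesgue differentiation inequality (which is why I drop the absolute value signs around $b(x) - M_Q b(x)$) and the a.e.\ vs.\ everywhere distinction when I apply the bound $M_Q b \ge |b|_Q$ (which actually holds everywhere on $Q$, not just a.e., and this is what lets the Lebesgue-point argument for $b^-$ go through cleanly).
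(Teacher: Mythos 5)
Your proof is correct and complete. Note that the paper does not actually prove this lemma: it cites Bastero--Milman--Ruiz \cite{BasMR01} and remarks only that the $\BMO$ part is ``similar to the proof of Lemma \ref{lem:norm-thing}'', which runs through the level-set decomposition $E=\{x\in Q: b(x)\le b_Q\}$, $F=Q\setminus E$, the cancellation $\int_E|b-b_Q|=\int_F|b-b_Q|$, and the pointwise bound $b(x)\le b_Q\le |b_Q|\le M_Qb(x)$ on $E$. You instead isolate the a.e.\ identity $|b-M_Qb|=M_Qb-b$ (from $M_Qb\ge|b|$ at Lebesgue points of $|b|$) and then argue by averaging: the triangle inequality plus $\fint_Q M_Qb\,dx\le b_Q+C$ gives $\|b\|_\ast\le 2C$, and the everywhere bound $M_Qb\ge |b|_Q$ gives $|b|_Q-b_Q\le C$, which differentiates to $2b^-\le C$ a.e. This is a genuinely different (and arguably cleaner) route: the single identity drives both conclusions, you avoid the $E$/$F$ splitting entirely, and you even get explicit constants ($\|b\|_\ast\le 2C$, $\|b^-\|_\infty\le C/2$); the paper's level-set approach has the advantage of transferring verbatim to the fractional operator $M_{\alpha,Q}$ in Lemma \ref{lem:norm-thing}, where the normalization $|Q|^{-\alpha/n}M_{\alpha,Q}b$ makes the averaging identity less immediate. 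Your cautionary remarks (interior points for the Lebesgue-point argument, and the everywhere vs.\ a.e.\ validity of $M_Qb\ge|b|_Q$) are exactly the right ones.
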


The next result is a weaker form of Proposition \ref{prop:main-2}, but it will be used later when we have commutator with $|b|$ which is clearly non-negative.
\begin{lem}\label{lem:alpha-commutator-bounded}
Let Assumptions \ref{assu:fractional} hold. If $0 \leq b \in \BMO(\Rn)$, then the commutator $[M_{\alpha}, b]: L^{\phi}(\Rn) \to L^{\psi}(\Rn)$ is bounded.
\end{lem}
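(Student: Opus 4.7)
The plan is to reduce the statement to Theorem \ref{thm:M_alpha_b-bounded}, which already gives boundedness of the maximal fractional commutator $M_{b,\alpha}: L^{\phi}(\Rn) \to L^{\psi}(\Rn)$ for $b \in \BMO(\Rn)$. The key observation, exploiting the nonnegativity $b \geq 0$, is the pointwise domination
\[
\bigl|[M_\alpha, b] f(x)\bigr| \leq M_{b,\alpha} f(x).
\]
Once this is established, the conclusion is immediate: for any $f \in L^{\phi}(\Rn)$,
\[
\|[M_\alpha, b]f\|_{\psi} \leq \|M_{b,\alpha} f\|_{\psi} \lesssim \|b\|_{\ast} \|f\|_{\phi}.
\]

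The pointwise inequality comes from two one-sided estimates. For each cube $Q \ni x$, using $b \geq 0$ so that $M_\alpha(bf)(x) = \sup_{Q \ni x} |Q|^{\alpha/n - 1}\int_Q b(y)|f(y)|\,dy$, the triangle inequality gives
\[
\frac{1}{|Q|^{1-\alpha/n}} \int_Q b(y)|f(y)|\,dy \leq \frac{1}{|Q|^{1-\alpha/n}} \int_Q |b(x)-b(y)||f(y)|\,dy + b(x)\cdot\frac{1}{|Q|^{1-\alpha/n}} \int_Q |f(y)|\,dy,
\]
and taking the supremum over $Q \ni x$ yields $M_\alpha(bf)(x) \leq M_{b,\alpha} f(x) + b(x) M_\alpha f(x)$. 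Conversely, writing
\[
b(x)\cdot\frac{1}{|Q|^{1-\alpha/n}}\int_Q |f(y)|\,dy \leq \frac{1}{|Q|^{1-\alpha/n}} \int_Q |b(x)-b(y)||f(y)|\,dy + \frac{1}{|Q|^{1-\alpha/n}} \int_Q b(y)|f(y)|\,dy
\]
(here the second integrand is nonnegative precisely because $b \geq 0$, and this is where the hypothesis is used) and taking the supremum gives $b(x) M_\alpha f(x) \leq M_{b,\alpha} f(x) + M_\alpha(bf)(x)$. Combining the two yields the pointwise bound.

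There is no real obstacle; the proof is a short application of the pointwise estimate together with Theorem \ref{thm:M_alpha_b-bounded}. The only subtlety is that the two-sided domination of $[M_\alpha, b]f$ by $M_{b,\alpha}f$ genuinely requires $b$ to be nonnegative — without this, one would need to carry along an extra term involving $b^-$ and $M_\alpha f$, as reflected in the discussion preceding Lemma \ref{lem:L1-version} and foreshadowed in the full Proposition~\ref{prop:main-2}.
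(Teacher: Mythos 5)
Your proof is correct and follows essentially the same route as the paper: both reduce $[M_\alpha,b]$ to the maximal commutator $M_{b,\alpha}$ via the pointwise bound $|[M_\alpha,b]f(x)|\leq M_{b,\alpha}f(x)$ (using $b\geq 0$ to write $M_\alpha(bf)$ and $b(x)M_\alpha f(x)$ as suprema of nonnegative averages) and then invoke Theorem \ref{thm:M_alpha_b-bounded}. The paper compresses your two one-sided estimates into the single observation that a difference of suprema is dominated by the supremum of the differences, but the content is identical.
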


\begin{proof}
As $|b(x)|= b(x)$ for all $x\in \Rn$, we see that
\begin{align*}
|[M_\alpha,b]f(x)| &= |M_\alpha(bf)(x) - b(x)M_\alpha f(x)| \\
&\leq \left | \sup_{Q \ni x} \dfrac{1}{|Q|^{1-\alpha/n}} \int_{Q} b(y)|f(y)| \, dy - \sup_{Q \ni x} \dfrac{1}{|Q|^{1-\alpha/n}} \int_{Q} b(x)|f(y)| \, dy \right | \\
&\leq \sup_{Q\ni x} \dfrac{1}{|Q|^{1-\alpha/n}} \int_{Q}|b(x)-b(y)||f(y)| \, dy = M_{b,\alpha}f(x).
\end{align*}
By Theorem \ref{thm:M_alpha_b-bounded} we see that the commutator is bounded.
\end{proof}

Let us separate the following result as a Lemma.

\begin{lem}\label{lem:norm-thing}
Let Assumptions \ref{assu:fractional} hold and suppose $[M_\alpha, b]:L^{\phi}(\Rn) \to L^{\psi}(\Rn)$ is bounded, where $b \in L^1_\loc(\Rn)$. Then $b \in \BMO(\Rn)$ and
\begin{align*}
\dfrac{\|(b - |Q|^{-\alpha/n} M_{\alpha,Q}b)\chi_Q\|_{\psi}}{\|\chi_Q\|_{\psi}} < C
\end{align*}
for every cube $Q$ and some finite constant $C$ independent of $Q$.
\end{lem}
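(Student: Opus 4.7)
The plan is to reduce the lemma to the pointwise identity
\[
M_\alpha(b\chi_Q)(x) = M_{\alpha, Q}b(x), \qquad x \in Q,
\]
together with the simple observation that $M_\alpha\chi_Q(x) = |Q|^{\alpha/n}$ for $x \in Q$. Once these are in hand, the definition of the commutator collapses on $Q$ to $[M_\alpha,b]\chi_Q(x) = M_{\alpha,Q}b(x) - |Q|^{\alpha/n} b(x)$, and dividing by $|Q|^{\alpha/n}$ yields the pointwise equality
\[
\bigl|b(x) - |Q|^{-\alpha/n} M_{\alpha,Q}b(x)\bigr|\,\chi_Q(x) = |Q|^{-\alpha/n}\bigl|[M_\alpha,b]\chi_Q(x)\bigr|\,\chi_Q(x).
\]

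To justify the identity I would analyze the supremum defining $M_\alpha(b\chi_Q)(x)$ by splitting cubes $Q'\ni x$ into three cases. The case $Q'\subset Q$ produces exactly $M_{\alpha,Q}b(x)$. When $|Q'|>|Q|$, the bound $\int_{Q'\cap Q}|b|\leq |Q|(|b|)_Q$ combined with $|Q'|^{\alpha/n-1}\leq |Q|^{\alpha/n-1}$ shows the associated average is at most $|Q|^{\alpha/n}(|b|)_Q\leq M_{\alpha,Q}b(x)$. The remaining case $|Q'|\leq|Q|$ with $Q'\not\subset Q$ requires a coordinate-wise construction of a cube $Q''\subset Q$ with $|Q''|=|Q'|$, $x\in Q''$ and $Q''\supset Q'\cap Q$; it then follows that $|Q'|^{-(1-\alpha/n)}\int_{Q'\cap Q}|b|\leq|Q''|^{-(1-\alpha/n)}\int_{Q''}|b|\leq M_{\alpha,Q}b(x)$. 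I expect this translation step to be the main technical obstacle, though it reduces to choosing a suitable center in each coordinate; the $\chi_Q$ calculation is the same argument with $b\equiv 1$.

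Taking $\psi$-norms in the pointwise identity, applying the boundedness of $[M_\alpha,b]\colon L^\phi(\Rn)\to L^\psi(\Rn)$, and invoking \eqref{eq:chi-phi-psi}, I obtain
\[
\|(b - |Q|^{-\alpha/n}M_{\alpha,Q}b)\chi_Q\|_\psi \leq |Q|^{-\alpha/n}\|[M_\alpha,b]\chi_Q\|_\psi \lesssim |Q|^{-\alpha/n}\|\chi_Q\|_\phi \approx \|\chi_Q\|_\psi,
\]
which is the required norm estimate. For the $\BMO$ conclusion I would split $|b(x)-b_Q|\leq |b(x) - |Q|^{-\alpha/n}M_{\alpha,Q}b(x)| + \bigl||Q|^{-\alpha/n}M_{\alpha,Q}b(x) - b_Q\bigr|$, integrate over $Q$ and normalize by $|Q|$. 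The first term is controlled by a constant via Lemma \ref{lem:holder} applied to the norm estimate, combined with Lemma \ref{lem:norm-of-ball}. For the second, the inequality $|Q|^{-\alpha/n}M_{\alpha,Q}b(x)\geq (|b|)_Q\geq b_Q$ on $Q$ (obtained by taking $Q'=Q$ in the restricted supremum) lets me drop the absolute value; the resulting signed average equals $-\frac{1}{|Q|}\int_Q(b - |Q|^{-\alpha/n}M_{\alpha,Q}b)\,dx$ and is bounded in absolute value by the first term's estimate. Supremum over all $Q$ gives $b\in\BMO(\Rn)$.
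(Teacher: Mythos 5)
Your proof is correct and, for the norm estimate, follows the paper's argument exactly: both rest on the identities $M_\alpha\chi_Q=|Q|^{\alpha/n}$ and $M_\alpha(b\chi_Q)=M_{\alpha,Q}b$ on $Q$, which collapse $[M_\alpha,b]\chi_Q$ pointwise on $Q$, followed by the boundedness hypothesis and \eqref{eq:chi-phi-psi}. A welcome addition on your side is that you actually justify these identities (the paper asserts them in \eqref{eq:M-b-chi1}--\eqref{eq:M-b-chi2} without proof); your three-case analysis, including the translation of a cube $Q'\not\subset Q$ with $|Q'|\le|Q|$ to a congruent cube $Q''\subset Q$ containing $Q'\cap Q$, is sound. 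The only genuine divergence is the $\BMO$ step: the paper splits $Q$ into $E=\{b\le b_Q\}$ and $F=\{b>b_Q\}$, uses the symmetry $\int_E|b-b_Q|=\int_F|b-b_Q|$, and bounds the oscillation on $E$ alone via $b(x)\le b_Q\le|Q|^{-\alpha/n}M_{\alpha,Q}b(x)$; you instead use the triangle inequality around $|Q|^{-\alpha/n}M_{\alpha,Q}b$ and observe that, after dropping the absolute value in the second term (legitimate, since $|Q|^{-\alpha/n}M_{\alpha,Q}b\ge(|b|)_Q\ge b_Q$ on $Q$), its average equals $-\fint_Q\bigl(b-|Q|^{-\alpha/n}M_{\alpha,Q}b\bigr)\,dx$ and is absorbed into the first term. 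Both variants hinge on the same key inequality and reduce to the same H\"older estimate with Lemma \ref{lem:norm-of-ball}, so they buy the same constant $2$ and neither is more general; yours avoids the set decomposition at the cost of one extra cancellation observation.
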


\begin{proof}
Let us first prove the norm estimate. First of all, we have the following identities for maximal functions of a characteristic function
\begin{align}\label{eq:M-b-chi1}
M_\alpha \chi_Q(x) = M_{\alpha,Q}\chi_Q(x) = \sup_{Q \ni x} \dfrac{1}{|Q|^{1-\alpha/n}} \int_{Q} \chi_{Q}(x) \, dx = |Q|^{\alpha/n} 
\end{align}
and
\begin{align}\label{eq:M-b-chi2}
M_\alpha(b \chi_Q)(x)  = \sup_{Q \ni x} \dfrac{1}{|Q|^{1-\alpha/n}}\int_{Q}b(x) \chi_Q(x)\, dx = M_{\alpha, Q}b(x).
\end{align}
These combined with the assumption that $[M_\alpha,b]:L^{\phi}(\Rn) \to L^{\psi}(\Rn)$ is bounded, we have the following estimate
\begin{align*}
\|(b-|Q|^{-\alpha/n}M_{\alpha,Q}(b))\chi_Q\|_{\psi} &= |Q|^{-\alpha/n} \|(b|Q|^{\alpha/n} - M_{\alpha,Q}b)\chi_Q\|_{\psi}\\
&\leq |Q|^{-\alpha/n}\|bM_\alpha \chi_Q - M_{\alpha}(b\chi_Q)\|_{\psi} \\
&= |Q|^{-\alpha/n} \|[M_\alpha, b]\chi_Q\|_{\psi} \lesssim |Q|^{-\alpha/n} \|\chi_Q\|_{\phi}.
\end{align*}
We proceed as in \eqref{eq:chi-phi-psi} to estimate $\|\chi_Q\|_{\phi} \lesssim |Q|^{\alpha/n}\|\chi_Q\|_{\psi}$.
Thus
\begin{align*}
\|(b-|Q|^{-\alpha/n}M_{\alpha,Q}b)\chi_Q\|_{\psi}\lesssim \|\chi_Q\|_{\psi}
\end{align*}
for every cube $Q$, so we have proved the desired norm estimate.

Now we are ready to prove that $b \in \BMO(\Rn)$. Let us denote for any cube $Q$ sets $E:=\{x \in Q : b(x) \leq b_Q\}$ and $F:= \{x \in Q : b(x) > b_Q\}$. It is immediate that $E = F^c$ and this combined with
\begin{align*}
\int_{Q}b(x) - b_Q \, dx=0
\end{align*}
yields
\begin{align*}
\int_{E} |b(x) - b_Q|\, dx = \int_{F}|b(x) - b_Q|\, dx.
\end{align*}
Also, for any $x \in E$ it holds that $b(x) \leq b_Q \leq |b_Q| \leq |Q|^{-\alpha/n}M_{\alpha,Q} b(x)$. Therefore
\begin{align}
|b(x)-b_Q| \leq |b(x) - |Q|^{-\alpha/n}M_{\alpha,Q}(b)(x)|
\end{align}
in the set $E$.

Let us estimate the mean oscillation of the function $b$
\begin{align*}
\dfrac{1}{|Q|} \int_Q |b(x)-b_Q|\, dx &= \dfrac{2}{|Q|}\int_{E}|b(x)-b_Q| \, dx \leq \dfrac{2}{|Q|} \int_{E} |b(x) - |Q|^{-\alpha/n}M_{\alpha,Q}b(x)|\, dx \\
&\leq \dfrac{2}{|Q|} \int_Q |b(x) - |Q|^{-\alpha/n}M_{\alpha,Q}b(x)| \, dx.
\end{align*}
We estimate the last integral with H\"older's inequality, use the previously proven norm estimate and Lemma \ref{lem:norm-of-ball} to get
\begin{align}\label{eq:bmo-ineq-line}
\begin{split}
\dfrac{1}{|Q|} \int_Q |b(x)-b_Q|\, dx &\lesssim \dfrac{1}{|Q|} \|(b-|Q|^{-\alpha/n}M_{\alpha,Q}b)\chi_Q\|_{\psi} \|\chi_Q\|_{\psi^{\ast}} \\
&\lesssim \dfrac{1}{|Q|} \|\chi_Q\|_{\psi} \|\chi_Q\|_{\psi^{\ast}} \leq C.
\end{split}
\end{align}
Thus, by definition, $b \in \BMO(\Rn)$.
\end{proof}

In the next Proposition we characterize $\BMO(\Rn)$ in terms of boundedness of commutators with fractional maximal operators. In Theorem \ref{thm:main-2} we improve condition $(3)$ to a more intrinsic condition containing the fractional maximal operator. The boundedness of the standard maximal commutator $[M, b]: L^{\phi}(\Rn) \to L^{\phi}(\Rn)$ if and only if $b\in \BMO(\Rn)$ and $b^{-}\in L^{\infty}(\Rn)$ was covered in Lemma \ref{lem:maximal-commutator}.
\begin{prop}\label{prop:main-2}
Let Assumptions \ref{assu:fractional} hold. If $b \in L^{1}_{\loc}(\Rn)$, then the following are equivalent:
\begin{enumerate}
\item{$b \in \BMO(\Rn)$ and $b^{-} \in L^{\infty}(\Rn)$.}
\item{The commutator $[M_{\alpha}, b]: L^{\phi}(\Rn) \to L^{\psi}(\Rn)$ is bounded.}
\item{We have
\begin{equation*}
\sup_{Q} \dfrac{\|(b-M_Q b)\chi_Q\|_{L^{\psi}(\Rn)}}{\|\chi_Q\|_{L^{\psi}(\Rn)}} < \infty.
\end{equation*}}
\end{enumerate}
\end{prop}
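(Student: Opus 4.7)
The plan is to prove the equivalence via the four implications $(1)\Rightarrow(2)$, $(3)\Rightarrow(1)$, $(1)\Rightarrow(3)$, and $(2)\Rightarrow(1)$; the first three proceed through earlier results, while $(2)\Rightarrow(1)$ contains the main difficulty. For $(1)\Rightarrow(2)$, the idea is to exploit that $M_\alpha$ is defined via $|f|$, so $M_\alpha(bf)=M_\alpha(|b|f)$ pointwise and hence
\[
[M_\alpha,b]f=[M_\alpha,|b|]f+2b^-M_\alpha f.
\]
Since $|b|\in\BMO(\Rn)$ is nonnegative with $\||b|\|_\ast\lesssim\|b\|_\ast$, Lemma \ref{lem:alpha-commutator-bounded} bounds the first summand from $L^\phi(\Rn)$ to $L^\psi(\Rn)$, while Theorem \ref{thm:M_alpha-bounded} estimates $\|b^-\|_\infty\|M_\alpha f\|_\psi$ by $\|f\|_\phi$.

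For $(3)\Rightarrow(1)$ the plan is to reduce to Lemma \ref{lem:L1-version}: H\"older's inequality together with Lemma \ref{lem:norm-of-ball} yields
\[
\frac{1}{|Q|}\int_Q|b-M_Qb|\,dx\lesssim\frac{\|(b-M_Qb)\chi_Q\|_\psi\|\chi_Q\|_{\psi^\ast}}{|Q|}\lesssim\frac{\|\chi_Q\|_\psi\|\chi_Q\|_{\psi^\ast}}{|Q|}\lesssim 1
\]
uniformly in $Q$, so Lemma \ref{lem:L1-version} delivers both $b\in\BMO(\Rn)$ and $b^-\in L^\infty(\Rn)$. For $(1)\Rightarrow(3)$, I would use that $M\chi_Q\equiv 1$ on $Q$ and $M(b\chi_Q)(x)\geq|b|_R$ for every $R\ni x$ with $R\subset Q$, so $M(b\chi_Q)\geq M_Qb$ on $Q$ and hence $|b-M_Qb|\chi_Q=(M_Qb-b)\chi_Q\leq[M,b]\chi_Q$ pointwise on $\Rn$. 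Since $\psi$ also satisfies \azero{}, \aone{}, \atwo{} and \ainc{}, applying Lemma \ref{lem:maximal-commutator} with $\psi$ in place of $\phi$ (using $b\in\BMO$ and $b^-\in L^\infty$) gives $\|[M,b]\chi_Q\|_\psi\lesssim(\|b\|_\ast+\|b^-\|_\infty)\|\chi_Q\|_\psi$, from which (3) follows.

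The heart is $(2)\Rightarrow(1)$. Lemma \ref{lem:norm-thing} already provides $b\in\BMO(\Rn)$, so the remaining task is $b^-\in L^\infty(\Rn)$, which that lemma does not furnish. I would argue by contradiction: if $b^-$ is not essentially bounded, pick a Lebesgue point $x_0$ of $b$ with $b(x_0)\leq-N$ for $N$ arbitrarily large, and choose $Q=Q(x_0,r)$ small enough that $|b|_Q\geq N/2$ and the set $E=\{x\in Q:b(x)\leq-N/2\}$ satisfies $|E|\gtrsim|Q|$. Since $M_\alpha\chi_Q\equiv|Q|^{\alpha/n}$ on $Q$ and $M_\alpha(b\chi_Q)=M_\alpha(|b|\chi_Q)\geq|Q|^{\alpha/n}|b|_Q$, for $x\in E$
\[
[M_\alpha,b]\chi_Q(x)=M_\alpha(|b|\chi_Q)(x)-b(x)|Q|^{\alpha/n}\gtrsim N|Q|^{\alpha/n}.
\]
Combined with $\|\chi_E\|_\psi\approx\|\chi_Q\|_\psi$ (which holds since $|E|\approx|Q|$, via Lemma \ref{lem:measure-scaling}), this produces the lower bound $\|[M_\alpha,b]\chi_Q\|_\psi\gtrsim N|Q|^{\alpha/n}\|\chi_Q\|_\psi$, whereas the assumed boundedness of $[M_\alpha,b]$ together with \eqref{eq:chi-phi-psi} gives the matching upper bound $\|[M_\alpha,b]\chi_Q\|_\psi\lesssim\|\chi_Q\|_\phi\approx|Q|^{\alpha/n}\|\chi_Q\|_\psi$; comparing the two forces $N\lesssim 1$, a contradiction. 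The main obstacle is making the pointwise and measure comparabilities uniform in the shrinking cube $Q$, as this requires simultaneous Lebesgue differentiation control on $b$ and a lower bound on $|b|_Q$, together with the robustness of $\|\chi_E\|_\psi$ under the loss $|E|\approx|Q|$.
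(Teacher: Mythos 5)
Your proposal is correct, but it departs from the paper's proof in two of the four implications, and the logical architecture differs: the paper proves the cycle $(1)\Rightarrow(2)\Rightarrow(3)\Rightarrow(1)$, whereas you prove $(1)\Leftrightarrow(2)$ and $(1)\Leftrightarrow(3)$ separately. Your $(1)\Rightarrow(2)$ and $(3)\Rightarrow(1)$ coincide with the paper's (the exact identity $[M_\alpha,b]f=[M_\alpha,|b|]f+2b^-M_\alpha f$ is what the paper writes as a pointwise inequality). For condition $(3)$, the paper instead proves $(2)\Rightarrow(3)$, starting from Lemma \ref{lem:norm-thing} and converting $|Q|^{-\alpha/n}M_{\alpha,Q}b$ into $M_Qb$ via the identity $M_{\alpha,Q}b-|Q|^{\alpha/n}M_Qb=[M_\alpha,|b|]\chi_Q-|Q|^{\alpha/n}[M,|b|]\chi_Q$ and the boundedness of both commutators for the nonnegative symbol $|b|$; your direct $(1)\Rightarrow(3)$ via the pointwise domination $0\le (M_Qb-b)\chi_Q\le [M,b]\chi_Q$ and Lemma \ref{lem:maximal-commutator} applied in $L^{\psi}$ is simpler and equally valid, since $\psi$ inherits \azero{}--\atwo{} and \ainc{}. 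The substantial divergence is $(2)\Rightarrow(1)$: the paper never extracts $b^-\in L^\infty$ directly from $(2)$ but routes it through $(3)$ and Lemma \ref{lem:L1-version}, whereas your Lebesgue-point/Chebyshev contradiction on shrinking cubes is new relative to the paper and works — the lower bound $[M_\alpha,b]\chi_Q\ge N|Q|^{\alpha/n}$ on $E$ and the upper bound via \eqref{eq:chi-phi-psi} are uniform in $Q$ and $N$, and as a bonus you get the quantitative estimate $\|b^-\|_\infty\lesssim\|[M_\alpha,b]\|$. One citation should be repaired: the comparison $\|\chi_E\|_{\psi}\gtrsim\|\chi_Q\|_{\psi}$ for measurable $E\subset Q$ with $|E|\ge\tfrac12|Q|$ does not follow from Lemma \ref{lem:measure-scaling}, which concerns cubes only; instead use H\"older's inequality, $|E|=\int_{\Rn}\chi_E\chi_Q\,dx\le 2\|\chi_E\|_{\psi}\|\chi_Q\|_{\psi^\ast}$, together with Lemma \ref{lem:norm-of-ball} to obtain $\|\chi_E\|_{\psi}\gtrsim |E|/\|\chi_Q\|_{\psi^\ast}\approx(|E|/|Q|)\,\|\chi_Q\|_{\psi}$.
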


\begin{proof}
Recall that the assumptions on $\phi$ guarantee the existence of $\psi$ which satisfies \azero{}--\atwo{}, \ainc{q}, \adec{1/(r-\alpha/n)} and the relation between their inverses \cite[Lemma 5.2.3]{HarH_book}. We proceed to prove $(1) \Rightarrow (2) \Rightarrow (3) \Rightarrow (1)$. For the first implication we start with the following pointwise estimates
\begin{align*}
|[M_\alpha, b]f(x) &- [M_\alpha, |b|]f(x)| \\
&= |M_\alpha(bf)(x) - b(x)M_\alpha f(x) - M_\alpha(|b|f)(x) + |b(x)|M_\alpha f(x)|  \\
&\leq |M_\alpha (bf)(x) - M_\alpha(|b|f)(x)| + |2b^{-}(x)M_\alpha f(x)| \\
&= 2b^{-}(x) M_\alpha f(x),
\end{align*}
where the inequality follows directly from $|b(x)|-b(x) = 2b^{-}(x)$. In other words, we have a direct estimate for the commutator
\begin{align*}
|[M_\alpha,b]f(x)| &\leq |[M_\alpha, b]f(x) - [M_\alpha, |b|]f(x)| + |[M_\alpha,|b|]f(x)| \\
&\leq 2b^{-}(x) M_\alpha f(x) + |[M_\alpha, |b|]f(x)|.
\end{align*}
Since also $|b| \in \BMO(\Rn)$, recalling $b^{-} \in L^{\infty}(\Rn)$ we can combine Theorem \ref{thm:M_alpha-bounded} and Lemma \ref{lem:alpha-commutator-bounded} to get
\begin{align*}
\|[M_\alpha,b]f\|_{\psi} \leq 2 \|b^{-}\|_{\infty} \|M_\alpha f\|_{\psi} + \|[M_\alpha,|b|]f\|_{\psi} \lesssim \|f\|_{\phi}.
\end{align*}

Next we prove $(2) \Rightarrow (3)$ and start with Lemma \ref{lem:norm-thing}, which yields
\begin{align}\label{eq:main-eq-1}
\begin{split}
\|(b- M_Qb)\chi_Q\|_{\psi} &= \|(b - |Q|^{-\frac{\alpha}{n}}M_{\alpha,Q}b + |Q|^{-\frac{\alpha}{n}}M_{\alpha,Q}b  - M_Qb)\chi_Q\|_{\psi} \\
&\leq \|(b - |Q|^{-\frac{\alpha}{n}}M_{\alpha,Q}b)\chi_Q\|_{\psi} + \| (|Q|^{-\frac{\alpha}{n}}M_{\alpha,Q}b - M_Qb)\chi_Q\|_{\psi} \\
&\lesssim \|\chi_Q\|_{\psi} + |Q|^{-\frac{\alpha}{n}} \|(M_{\alpha,Q}b - |Q|^{\frac{\alpha}{n}} M_Qb)\chi_Q\|_{\psi}.
\end{split}
\end{align}
Letting $x \in Q$ and taking account \eqref{eq:M-b-chi1} and \eqref{eq:M-b-chi2} we have
\begin{align*}
[M_\alpha, |b|]\chi_Q(x) &= M_\alpha(b\chi_Q)(x) - |b(x)| M_\alpha \chi_Q(x) = M_{\alpha,Q}b(x) - |b(x)|M_\alpha \chi_Q(x) \\
&=M_{\alpha,Q}b(x) - |Q|^{\frac{\alpha}{n}}|b(x)|.
\end{align*}
together with
\begin{align*}
|Q|^{\frac{\alpha}{n}}[M,|b|]\chi_Q(x) &= |Q|^{\frac{\alpha}{n}} M_Qb(x) - |Q|^{\frac{\alpha}{n}} |b(x)| M\chi_Q(x) \\
&= |Q|^{\frac{\alpha}{n}} M_Qb(x) - |Q|^{\frac{\alpha}{n}} |b(x)|.
\end{align*}
We subtract the last display from the previous and get
\begin{align*}
[M_\alpha, |b|]\chi_Q(x) &- |Q|^{\frac{\alpha}{n}}[M,|b|]\chi_Q(x) =M_{\alpha,Q}b(x) - |Q|^{\frac{\alpha}{n}} M_Qb(x).
\end{align*}
This and \eqref{eq:main-eq-1} combine to
\begin{align*}
\|(b- M_Qb)\chi_Q\|_{\psi}  &\lesssim \|\chi_{Q}\|_{\psi} + |Q|^{-\frac{\alpha}{n}} \| [M_\alpha, |b|]\chi_Q- |Q|^{\frac{\alpha}{n}}[M,|b|]\chi_Q\|_{\psi} \\
&\leq \|\chi_Q\|_{\psi}+ |Q|^{-\frac{\alpha}{n}} \|[M_\alpha, |b|]\chi_Q\|_{\psi} +  \|[M,|b|]\chi_Q\|_{\psi}.
\end{align*}
As $0\leq |b| \in \BMO(\Rn)$, the commutators are bounded (Lemma \ref{lem:maximal-commutator} and Lemma \ref{lem:alpha-commutator-bounded}), and thus
\begin{align*}
\|(b- M_Qb)\chi_Q\|_{\psi}  \lesssim \|\chi_Q\|_{\psi} + |Q|^{-\frac{\alpha}{n}}\|\chi_Q\|_{\phi} + \|\chi_Q\|_{\psi}.
\end{align*}
Finally, using \eqref{eq:chi-phi-psi} for the second term on the right-hand side, we have
\begin{align*}
\|(b- M_Qb)\chi_Q\|_{\psi}  \lesssim \|\chi_Q\|_{\psi}.
\end{align*}

Then we show $(3) \Rightarrow (1)$.  H\"older's inequality, the assumption and Lemma \ref{lem:norm-of-ball} yield

\begin{align*}
\dfrac{1}{|Q|} \int_{Q} |b(x) - M_Qb(x)| \, dx &\lesssim \dfrac{1}{|Q|} \|(b- M_Qb)\chi_Q\|_{\psi} \|\chi_Q\|_{\psi^{\ast}} \\
&\lesssim \dfrac{1}{|Q|} \|\chi_Q\|_{\psi} \|\chi_Q\|_{\psi^{\ast}} \leq C.
\end{align*}
Now Lemma \ref{lem:L1-version} shows that $b \in \BMO(\Rn)$ and $b^{-} \in L^{\infty}(\Rn)$.
\end{proof}

Now we prove our main theorem regarding commutators of fractional maximal operators. The main difference to previous Proposition is that the norm condition includes fractional maximal operator instead of a standard Hardy--Littlewood maximal operator. Also, it is enough for the norm condition to hold for any weak $\Phi$-function $\eta$ with structural conditions guaranteeing boundedness of the Hardy--Littlewood maximal operator. 
\begin{thm}\label{thm:main-2}
Let $1 < p \leq q < \infty$, $\frac{\alpha}{n} = \frac{1}{p} - \frac{1}{q}$ and $\phi, \psi \in \Phi_w(\Rn)$ be such that
\begin{equation*}
\phi^{-1}(x,t) \approx  t^{\frac{\alpha}{n}}\psi^{-1}(x,t).
\end{equation*}
Let $r \in  (\frac{\alpha}{n}, \frac{1}{p}]$. Assume that $\phi$ satisfies \azero{}, \aone{}, \atwo{}, \ainc{p} and \adec{1/r}. If $b \in L^{1}_{\loc}(\Rn)$, then the following are equivalent:
\begin{enumerate}
\item{$b \in \BMO(\Rn)$ and $b^{-} \in L^{\infty}(\Rn)$.}
\item{The commutator $[M_{\alpha}, b]: L^{\phi}(\Rn) \to L^{\psi}(\Rn)$ is bounded.}
\item{We have
\begin{equation}\label{eq:main-eq}
\sup_{Q} \dfrac{\|(b-|Q|^{-\alpha/n}M_{\alpha,Q} b)\chi_Q\|_{L^{\eta}(\Rn)}}{\|\chi_Q\|_{L^{\eta}(\Rn)}} < \infty
\end{equation}
for some $\eta \in \Phi_w(\Rn)$ for which $M:L^{\eta}(\Rn) \to L^{\eta}(\Rn)$ is bounded.}
\item{Condition \eqref{eq:main-eq} holds for all $\eta \in \Phi_w(\Rn)$ satisfying \azero{}, \aone{}, \atwo{}, \ainc{} and \adec{}.}
\end{enumerate}
\end{thm}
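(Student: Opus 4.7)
Since Proposition \ref{prop:main-2} already supplies $(1) \Leftrightarrow (2)$, and $(4) \Rightarrow (3)$ is immediate (take $\eta = \psi$, which satisfies \azero{}, \aone{}, \atwo{}, \ainc{q} and \adec{1/(r-\alpha/n)}, whence $M$ is bounded on $L^\psi$ by \eqref{eq:M-phi-bdd}), the two substantive implications are $(1) \Rightarrow (4)$ and $(3) \Rightarrow (1)$. The starting point for both is the pointwise identity
\begin{equation*}
(b - |Q|^{-\alpha/n} M_{\alpha,Q} b)\chi_Q = -|Q|^{-\alpha/n}\, [M_\alpha, b]\chi_Q \cdot \chi_Q,
\end{equation*}
which follows directly from $M_\alpha \chi_Q(x) = |Q|^{\alpha/n}$ and $M_\alpha(b\chi_Q)(x) = M_{\alpha,Q}b(x)$ for $x \in Q$, as recorded in \eqref{eq:M-b-chi1}--\eqref{eq:M-b-chi2}.

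For $(1) \Rightarrow (4)$, I would fix a cube $Q$ and $\eta$ as in (4), then decompose $b = |b| - 2b^{-}$, giving $[M_\alpha, b] = [M_\alpha, |b|] - 2[M_\alpha, b^{-}]$. Since $b^{-} \in L^{\infty}(\Rn)$, the crude bound $|[M_\alpha, b^{-}]\chi_Q(x)| \leq 2\|b^{-}\|_\infty M_\alpha \chi_Q(x)$ suffices, and on $Q$ this equals $2\|b^{-}\|_\infty |Q|^{\alpha/n}$. For the non-negative function $|b|$, the elementary pointwise estimate $|[M_\alpha, |b|]\chi_Q(x)| \leq M_{|b|, \alpha}\chi_Q(x)$ applies. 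A case split on cubes $Q' \ni x$ in the definition of $M_{|b|, \alpha}\chi_Q(x)$, treating $|Q'| \leq |Q|$ via $|Q'|^{\alpha/n} \leq |Q|^{\alpha/n}$ and $|Q'| > |Q|$ via $Q' \cap Q \subset Q$ together with $|Q'|^{\alpha/n - 1} \leq |Q|^{\alpha/n - 1}$, yields
\begin{equation*}
M_{|b|, \alpha}\chi_Q(x) \leq |Q|^{\alpha/n} M_{|b|}\chi_Q(x) \quad \text{for } x \in Q.
\end{equation*}
Since $||b(x)|-|b(y)|| \leq |b(x)-b(y)|$ gives $M_{|b|}\chi_Q \leq M_b \chi_Q$ pointwise, Lemma \ref{lem:maximal-commutator} supplies $\|M_b \chi_Q\|_\eta \lesssim \|b\|_\ast \|\chi_Q\|_\eta$. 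Assembling these bounds produces $\|(b - |Q|^{-\alpha/n}M_{\alpha,Q}b)\chi_Q\|_\eta \lesssim (\|b\|_\ast + \|b^{-}\|_\infty)\|\chi_Q\|_\eta$, uniformly in $Q$.

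For $(3) \Rightarrow (1)$, H\"older's inequality (Lemma \ref{lem:holder}) combined with Lemma \ref{lem:norm-of-ball} converts the norm estimate of (3) into
\begin{equation*}
\sup_Q \frac{1}{|Q|}\int_Q \bigl| b - |Q|^{-\alpha/n} M_{\alpha,Q}b \bigr|\, dx < \infty.
\end{equation*}
Choosing $Q' = Q$ in the definition of $M_{\alpha, Q}b$ yields $|Q|^{-\alpha/n} M_{\alpha,Q}b(x) \geq |b|_Q \geq b_Q$ for every $x \in Q$. Integrating the elementary inequality $|Q|^{-\alpha/n}M_{\alpha,Q}b - b \leq |b - |Q|^{-\alpha/n}M_{\alpha,Q}b|$ over $Q$, and using $\int_Q(|b|-b) \leq \int_Q (|Q|^{-\alpha/n}M_{\alpha,Q}b - b)$ (which follows by integrating the lower bound $|Q|^{-\alpha/n}M_{\alpha,Q}b \geq |b|_Q$), gives $\frac{1}{|Q|}\int_Q b^{-}\, dx \lesssim 1$, whence Lebesgue differentiation forces $b^{-} \in L^{\infty}(\Rn)$. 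Writing $E = \{x \in Q : b(x) \leq b_Q\}$ and $F = Q \setminus E$, the identity $\int_E |b - b_Q| = \int_F |b - b_Q|$ together with the chain $b(x) \leq b_Q \leq |b|_Q \leq |Q|^{-\alpha/n}M_{\alpha,Q}b(x)$ on $E$ yields $|b - b_Q|(x) \leq |b - |Q|^{-\alpha/n}M_{\alpha,Q}b|(x)$ on $E$, and hence $\frac{1}{|Q|}\int_Q |b - b_Q|\, dx \leq \frac{2}{|Q|}\int_Q |b - |Q|^{-\alpha/n}M_{\alpha,Q}b|\, dx \lesssim 1$, so $b \in \BMO(\Rn)$.

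The principal technical point is the pointwise comparison $M_{|b|,\alpha}\chi_Q(x) \leq |Q|^{\alpha/n} M_{|b|}\chi_Q(x)$ for $x \in Q$; it is the bridge that reduces the fractional commutator estimate for arbitrary $\eta$ to the classical maximal commutator bound of Lemma \ref{lem:maximal-commutator}. The estimate requires care because the factor $|Q'|^{\alpha/n}$ in $M_{|b|,\alpha}$ scales with the varying cube $Q'$ rather than with the fixed reference cube $Q$, forcing the size-based case split above.
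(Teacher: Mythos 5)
Your proposal is correct in substance but takes a genuinely different route from the paper on both non-trivial implications. For the implication towards $(4)$, the paper proves $(2)\Rightarrow(4)$: it first extracts \eqref{eq:main-eq} for the distinguished $\psi$ via Lemma \ref{lem:norm-thing}, and then transfers the estimate to an arbitrary admissible $\eta$ by factoring $\eta^{-1}=\psi^{-1}\xi^{-1}$ with $\psi(x,t)=\eta(x,t^{r})$ and invoking the generalized H\"older inequality (Lemma \ref{lem:gen-holder}). Your pointwise comparison $M_{|b|,\alpha}\chi_Q(x)\le|Q|^{\alpha/n}M_{|b|}\chi_Q(x)$ for $x\in Q$, which reduces everything to Lemma \ref{lem:maximal-commutator}, is more elementary and checks out (both cases $|Q'|\le|Q|$ and $|Q'|>|Q|$ work because $\alpha/n-1<0$). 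For $(3)\Rightarrow(1)$ the paper first converts the fractional condition into the non-fractional one $\sup_Q|Q|^{-1}\int_Q|b-M_Qb|\,dx<\infty$ (at the cost of re-using Lemmas \ref{lem:maximal-commutator}, \ref{lem:alpha-commutator-bounded} and \eqref{eq:chi-phi-psi} to control the difference $|Q|^{-\alpha/n}M_{\alpha,Q}b-M_Qb$) and then quotes Lemma \ref{lem:L1-version}; you instead obtain $b^{-}\in L^{\infty}(\Rn)$ directly from $\int_Q|Q|^{-\alpha/n}M_{\alpha,Q}b\,dx\ge\int_Q|b|\,dx$ (take $Q'=Q$ in the supremum) together with Lebesgue differentiation. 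I verified this and it is correct; it is noteworthy because it resolves, at the level of the integrated condition, precisely the point the paper flags before Lemma \ref{lem:L1-version} as not known for the fractional maximal function.

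Two blemishes should be repaired. First, the identity $[M_\alpha,b]=[M_\alpha,|b|]-2[M_\alpha,b^{-}]$ is false: $M_\alpha(bf)$ depends only on $|b|$, so the commutator is not additive in the symbol $b$. The correct relation, used in Proposition \ref{prop:main-2}, is $[M_\alpha,b]f=[M_\alpha,|b|]f+2b^{-}M_\alpha f$ for $f\ge0$; since your bound for the error term is $2\|b^{-}\|_\infty M_\alpha\chi_Q$ either way, the final estimate survives unchanged, but the decomposition must be restated. Second, in $(3)\Rightarrow(1)$ you invoke Lemma \ref{lem:norm-of-ball} for $\eta$, which is only assumed to make $M$ bounded on $L^{\eta}(\Rn)$ and need not satisfy \azero{}--\atwo{}. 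What you actually need is the one-sided bound $\|\chi_Q\|_{\eta}\|\chi_Q\|_{\eta^{\ast}}\lesssim|Q|$, and this does follow from boundedness of $M$ alone, since the averaging operator $f\mapsto\chi_Q\fint_Q|f|\,dy$ is pointwise dominated by $Mf$ and one then applies the norm conjugate formula (Lemma \ref{lem:norm-conjugate}). So this is a citation issue rather than a gap --- and one the paper's own proof shares, as its $J_1$ estimate is silently carried out with $\psi$ in place of the general $\eta$ of condition $(3)$.
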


\begin{proof}
Equivalence $(1) \Leftrightarrow (2)$ was proven in Proposition \ref{prop:main-2}. As the implication $(4) \Rightarrow (3)$ is immediate, we proceed to prove $(3) \Rightarrow (1)$ and $(2) \Rightarrow (4)$.

We first assume $(3)$. Choosing any cube $Q$ we get
\begin{align*}
\dfrac{1}{|Q|}\int_{Q} |b(x) - M_Qb(x)| \, dx &\leq \dfrac{1}{|Q|} \int_{Q} |b(x) - |Q|^{-\alpha/n}M_{\alpha,Q}b(x)| \, dx \\
&\quad + \dfrac{1}{|Q|}\int_{Q} ||Q|^{-\alpha/n}M_{\alpha,Q}b(x) - M_Qb(x)| \, dx =: J_1 + J_2.
\end{align*}
We first estimate the term $J_1$ with H\"older's inequality, Lemma \ref{lem:norm-of-ball} and the assumption as follows
\begin{align*}
J_1 \lesssim \dfrac{1}{|Q|} \|(b-|Q|^{-\alpha/n}M_{\alpha,Q} b)\chi_Q\|_{\psi}\|\chi_Q\|_{\psi^{\ast}} \lesssim \dfrac{\|(b-|Q|^{-\alpha/n}M_{\alpha,Q} b)\chi_Q\|_{\psi}}{\|\chi_Q\|_{\psi}} \leq C.
\end{align*}

For $J_2$ let us choose any $x \in Q$. Then \eqref{eq:M-b-chi1} and \eqref{eq:M-b-chi2} guarantee us that $M\chi_Q(x) = 1$ and $M(b\chi_Q)(x) =M_Qb(x)$ for the maximal function and $M_\alpha \chi_Q(x) = |Q|^{\alpha/n}$ and $M_{\alpha}(b\chi_Q)(x) = M_{\alpha,Q}b(x)$ for the fractional maximal function. With these observations we estimate the integrand in $J_2$ as a sum of commutators
\begin{align*}
\big | |Q|^{-\alpha/n} &M_{\alpha,Q}b(x) - M_Qb(x)\big | \leq \left |Q|^{-\alpha/n}|M_{\alpha,Q}b(x) - |Q|^{\alpha/n}|b(x)|\right | + \left ||b(x) - M_Qb(x)\right | \\
&=|Q|^{-\alpha/n} |M_\alpha (b\chi_Q)(x) - |b(x)|M_\alpha \chi_Q(x)| + ||b(x)|M\chi_Q(x) - M(b\chi_Q)(x)| \\
&=|Q|^{-\alpha/n} |[M_\alpha, |b|]\chi_Q(x)| + |[M,|b|]\chi_Q(x)|.
\end{align*}
This yields
\begin{align*}
J_2 \leq \dfrac{1}{|Q|^{1+\alpha/n}} \int_{Q}|[M_\alpha,|b|]\chi_Q(x)|\, dx + \dfrac{1}{|Q|} \int_{Q} |[M,|b|]\chi_Q(x)| \, dx.
\end{align*}

Now the assumption $(3)$ and a similar argument as in latter part of Lemma \ref{lem:norm-thing} show that $b \in \BMO(\Rn)$ which further implies that $|b| \in \BMO(\Rn)$. Therefore, we can apply Lemma \ref{lem:alpha-commutator-bounded} to show that $[M_\alpha, |b|]$ and $[M,|b|]$ are bounded operators. The first term can be shown to be bounded in the following way using H\"older's inequality, the boundedness of the commutator, \eqref{eq:chi-phi-psi} and Lemma \ref{lem:norm-of-ball} 
\begin{align*}
\dfrac{1}{|Q|^{1+\alpha/n}} \int_{Q}|[M_\alpha,|b|]\chi_Q(x)|\, dx &\lesssim \dfrac{1}{|Q|^{1+\alpha/n}} \|[M_\alpha,|b|]\chi_Q\|_{\psi}\|\chi_Q\|_{\psi^{\ast}} \\
&\lesssim \dfrac{1}{|Q|^{1+\alpha/n}} \|\chi_Q\|_{\phi}\|\chi_Q\|_{\psi^\ast} \\
&\lesssim \dfrac{1}{|Q|} \|\chi_Q\|_{\psi} \|\chi_Q\|_{\psi^{\ast}} \leq C.
\end{align*}
In a similar fashion the second term is shown be bounded
\begin{align*}
\dfrac{1}{|Q|} \int_{Q}|[M,|b|]\chi_Q(x)| \, dx \lesssim \dfrac{1}{|Q|} \|[M,|b|]\chi_Q\|_{\phi} \|\chi_Q\|_{\phi^{\ast}} \lesssim \dfrac{1}{|Q|} \|\chi_{Q}\|_{\phi}\|\chi\|_{\phi^\ast} \leq C.
\end{align*}

Thus we have shown that
\begin{align*}
\dfrac{1}{|Q|} \int_{Q}|b(x) - M_Qb(x)|\, dx < C,
\end{align*}
from which we know due to Lemma \ref{lem:L1-version} that $b \in \BMO(\Rn)$ and $b^{-} \in L^{\infty}(\Rn)$.

Let us then assume $(2)$ and prove $(4)$. Lemma \ref{lem:norm-thing} shows that \eqref{eq:main-eq} holds when $\eta = \psi$ satisfying the relation with $\phi$. Let $\eta$ then be any weak $\Phi$-function such that the maximal operator is bounded from $L^\eta(\Rn) \to L^{\eta}(\Rn)$. 
We choose $\psi(x,t) = \eta(x,t^{r})$, where $r>\frac{n}{n-\alpha}$ to guarantee that $\phi$ satisfies \ainc{}. Let us denote $\xi(x,t) = \eta(x,t^{r'})$, where $r'$ is the H\"older conjugate of $r$. Now
\begin{align*}
\eta^{-1}(x,t) = \eta^{-1}(x,t)^{1/r} \eta^{-1}(x,t)^{1/r'} = \psi^{-1}(x,t)\xi^{-1}(x,t)
\end{align*}
so the generalized H\"older's inequality, Lemma \ref{lem:gen-holder}, is valid with $\eta, \psi$ and $\xi$. Applying first it and then \eqref{eq:main-eq} with $\eta = \psi$ we get
\begin{align*}
\dfrac{\|(b-|Q|^{-\alpha/n}M_{\alpha,Q}b)\chi_Q\|_{\eta}}{\|\chi_Q\|_{\eta}} \lesssim \dfrac{\|(b-|Q|^{-\alpha/n}M_{\alpha,Q}b)\chi_Q\|_{\psi}\|\chi_Q\|_{\xi}}{\|\chi_Q\|_{\eta}} \lesssim \dfrac{\|\chi_Q\|_{\psi}\|\chi_Q\|_{\xi}}{\|\chi_Q\|_{\eta}}.
\end{align*}
From the definition of $\psi$ and $\xi$ we continue
\begin{align*}
\dfrac{\|(b-|Q|^{-\alpha/n}M_{\alpha,Q}b)\chi_Q\|_{\eta}}{\|\chi_Q\|_{\eta}} \lesssim \dfrac{\|\chi_Q\|_{\eta}^{1/r}\|\chi_Q\|_{\eta}^{1/r'}}{\|\chi_Q\|_{\eta}}=1
\end{align*}
so \eqref{eq:main-eq} holds.
\end{proof}


\bigskip

\noindent\small{\textsc{A. Karppinen}}\\
\small{Department of Mathematics and Statistics,
FI-20014 University of Turku, Finland}\\
\footnotesize{\texttt{arttu.a.karppinen@utu.fi}}\\

\end{document}